\numberwithin{equation}{section}
\let\cal\mathcal
\def\Dscr{{\cal D}} 
\def\Fscr{{\cal F}}
\def\Hscr{{\cal H}}
\def\Mscr{{\cal M}}
\def\Oscr{{\cal O}}
\def\Pscr{{\cal P}}
\def\Rscr{{\cal R}}
\def\Sscr{{\cal S}}
 \def\Xscr{{\cal X}}
\def\Yscr{{\cal Y}}
\def\Zscr{{\cal Z}}
\def\gscr{{\mathfrak g}}
\let\blb\mathbb
\def\CC{{\blb C}}
\def\QQ{{\blb Q}}
\def \ZZ{{\blb Z}}
\def \NN{{\blb N}}
\newcommand{\cH}{\mathcal{H}}
\newtheorem{lemma}{Lemma}[section]
\newtheorem{theorem}[lemma]{Theorem}
\newtheorem{corollary}[lemma]{Corollary}
\theoremstyle{definition}
\newtheorem{definition}[lemma]{Definition}
\theoremstyle{remark}
\newtheorem{remark}[lemma]{Remark}
\newdimen\uboxsep \uboxsep=1ex
\def\uboxn#1{\vtop to 0pt{\hrule height 0pt depth 0pt\vskip\uboxsep
\hbox to 0pt{\hss #1\hss}\vss}}
\def\uboxs#1{\vbox to 0pt{\vss\hbox to 0pt{\hss #1\hss}
\vskip\uboxsep\hrule height 0pt depth 0pt}}
\let\amsamp=&
\gdef\pampmatrix{%
  \begingroup
  \let&=\amsamp
  \begin{pmatrix}%
}
\gdef\endpampmatrix{\end{pmatrix}\endgroup}
\author{Felix Küng}
\address{Département de Mathématique, Université Libre de Bruxelles, Campus de la
Plaine CP 213, Bld du Triomphe, B-1050 Bruxelles, Belgium}
\email{felix.kung@ulb.be}
\author{Špela Špenko}
\address{Département de Mathématique, Université Libre de Bruxelles, Campus de la
Plaine CP 213, Bld du Triomphe, B-1050 Bruxelles, Belgium}
\email{spela.spenko@ulb.be}
\thanks{The authors were supported by a MIS grant from the National Fund for Scientific Research
(FNRS) and an ARC grant from the Université Libre de Bruxelles.  Part of this paper was written while the
authors were in residence at the Simons Laufer Mathematical Sciences Institute (formerly MSRI) 
in Berkeley, California, during the Spring 2024 semester.}
\title[Comparison of CoHA and KHA]{Comparison of cohomological and K-theoretical Hall algebra}
\begin{document}
\begin{abstract}
	We give a more conceptual construction of a comparison  algebra morphism from the $K$-theoretical Hall algebra to a twist of the cohomological Hall algebra associated to a symmetric quiver constructed by \cite{KS}, and extend the result to quivers with potential. 
\end{abstract}

\maketitle

\section{Introduction}
Let $Q$ be a quiver with a potential $W$. 
The cohomological Hall algebra (CoHA) $\Hscr^W$ associated to $(Q,W)$  was introduced by Kontsevich and Soibelman \cite{KS}, while the K-theoretical Hall algebra (KHA) $\Rscr^W$ was introduced by P{$\rm\breve{a}$}durariu \cite{Padurariu}. A main motivation for these constructions is that KHAs are positive halves of Yangians while CoHAs are halves of quantum affine algebras \cite{Botta2023,Varagnolo2022,Schiffmann2023}. This analogy is witnessed particularly by KHAs and CoHAs of triple quivers with potential recovering on the nose positive halves of Yangians and quantum affine algebras \cite{Yang2018}. However, for more general symmetric quivers with potential, one expects new quantum group-like objects. 

As there exists a natural comparison morphism of quantum affine algebras and Yangians \cite{Gautam2013}, given by exponentiation of roots on Drinfeld polynomials \cite{Drinfeld1990}, a similar comparison is expected between KHAs and a twisted version of the corresponding COHAs.

In \cite{Lunts2024} this comparison has been realized for a symmetric quiver $Q$ and $W=0$ by a modification of the Chern character to obtain the conjectured algebra morphism $\Rscr\to \hat{\Hscr}^\sigma$,\footnote{We write $\Hscr=\Hscr^0$, $\Rscr=\Rscr^0$.} where $\hat{\Hscr}$ is the completion of $\Hscr$ and $\sigma$ is a Zhang twist. The Chern character was modified in a rather ad hoc way in loc.cit. Here we provide a more conceptual explanation of this modification, related to the square root of Todd classes. This further allows us to extend the morphism to quivers with potentials as seen in
\setcounter{section}{4}
\setcounter{lemma}{7}
\begin{theorem}
	The morphism 
	$$v:\Rscr^W\to \left(\widehat{\Hscr^W},\circ\right)$$
	is a morphism of algebras.
\end{theorem}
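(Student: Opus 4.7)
The plan is to reduce the multiplicativity $v(a\cdot b)=v(a)\circ v(b)$ to a Grothendieck–Riemann–Roch (GRR) calculation performed on the Hall correspondence. Recall that both products are defined by push–pull along a diagram
\[X:=\Rep_{d_1}\times\Rep_{d_2}\xleftarrow{\pi}Z\xrightarrow{\mu}Y:=\Rep_{d_1+d_2},\]
with $\pi$ smooth and $\mu$ proper representable, together with the vanishing cycle datum $\varphi_W$ glueing the potentials $W_{d_1}$, $W_{d_2}$, $W_{d_1+d_2}$ via Thom–Sebastiani. The first step is to unpack the definition of $v$ as the composition of the ordinary Chern character with multiplication by $\sqrt{\mathrm{Td}(T_{\Rep_d})}$ on each smooth ambient stack $\Rep_d$, making manifest the analogy with the symmetric, Mukai-style form of GRR that underlies the modification performed ad hoc in \cite{Lunts2024}.

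The second step is the verification in the $W=0$ situation. One applies classical GRR to the proper map $\mu$ and uses that $\pi^{*}$ commutes with the Chern character on the nose; the defect between $v(a\cdot b)$ and $v(a)\cdot v(b)$ then becomes the insertion of
\[\sqrt{\mathrm{Td}(T_Y)}\cdot\mathrm{Td}(T_\mu)\cdot\bigl(\sqrt{\mathrm{Td}(T_X)}\bigr)^{-1}\]
along the correspondence. Using the distinguished triangles $T_{Z/Y}\to T_Z\to \mu^{*}T_Y$ and $T_{Z/X}\to T_Z\to \pi^{*}T_X$ together with the symmetry of $Q$, this expression rearranges into an Euler-form exponential that is, by construction, exactly the Zhang twist $\circ$. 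This gives a conceptual derivation of the comparison morphism and recovers the result of \cite{Lunts2024} as a special case.

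The third step is the extension to nonzero $W$. By Thom–Sebastiani the vanishing cycle functor $\varphi_W$ is compatible with smooth pullback along $\pi$ and, in the appropriate derived sense, with proper pushforward along $\mu$, so all Todd-class manipulations from step two lift through $\varphi_W$. The hard part, which I expect to be the main obstacle, is a GRR-type identity for the modified Chern character $v$ compatible with $\varphi_W$: classical GRR lives on smooth stacks, while $\Rscr^W$ and $\Hscr^W$ naturally sit over the critical locus of $W$. Overcoming this requires either a GRR statement directly in the matrix factorization / singularity category model of critical K-theory, or a reduction to the ambient smooth stacks via dimensional reduction while carefully tracking the Todd corrections introduced by the Koszul resolution. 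Once that compatibility is secured, the bookkeeping of Todd and Euler factors carried out in the $W=0$ case transfers verbatim, yielding the theorem.
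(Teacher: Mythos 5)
Your outline gets the $W=0$ skeleton right and correctly locates $v$ as $\sqrt{Td}\cdot ch$, but for the actual statement at hand (nonzero $W$) the proposal is conditional rather than a proof, and the condition you leave open is not the only missing ingredient. The GRR-type identity compatible with $\varphi_W$ that you flag as ``the main obstacle'' is indeed needed, but it is not something to be re-derived by dimensional reduction or Koszul bookkeeping: the paper simply invokes P\u{a}durariu's Grothendieck--Riemann--Roch theorem for matrix factorizations (\cite[Theorem~6.8]{Padurariu2023}, together with the observation that the topological argument works for algebraic $K_0$), plus the elementary but necessary Lemma~\ref{lem:Todd class of singularity category is restriction} stating $\iota^*Td_h=Td_{h|_{\Xscr_0}}$, so that the Todd factorizations \eqref{Td12vsTd21}, \eqref{TdG12vs21} computed on the smooth ambient stacks restrict correctly to the critical fibre. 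Your proposal neither supplies this input nor cites it, so as written the third step is an announcement of what remains to be done.

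The more serious omission is the point you dismiss with ``all Todd-class manipulations from step two lift through $\varphi_W$ by Thom--Sebastiani.'' In the $W=0$ argument the twisting classes $\tilde c^{\gamma_1}_{\gamma_2}$, $c^{\gamma_1}_{\gamma_2}$ are pulled in and out of the Hall product via Lemma~\ref{lem:mult_prop_T}, i.e.\ via the $\Sscr$-module structure on $\tilde\Hscr$ and the projection formula. For nonzero $W$ the CoHA lives in critical cohomology $H^\bullet_{T_\gamma}(M_\gamma,\varphi_W)$, which is only a module over $H^\bullet_{T_\gamma}(\cdot,\QQ)$ through Davison's construction, and one must check that this module action commutes with $\tilde p^*$ and with $\tilde i_*$. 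This is exactly the new verification in the paper's proof: compatibility with $\tilde i_*$ is not formal, it is proved by exhibiting the action through the maps $\Delta_N$ on finite-dimensional approximations and using a transversality statement (\cite[Corollary 2.15]{Davison}) to commute $i_*$ past $(\Delta_N)^*$, with the fact that source and target of $\tilde p$, $\tilde i$ carry the same torus action playing a role. Thom--Sebastiani alone does not give you this, so even granting your GRR input, the step where the Euler-form exponential is identified with the Zhang twist $\circ$ on $\widehat{\Hscr^W}$ has a genuine gap in your argument.
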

\setcounter{section}{1}
\setcounter{lemma}{0}

Using this morphism one can use analogous reasoning to \cite{Lunts2024} to identify finite length modules over KHAs with the corresponding objects over CoHAs.

\subsection{Acknowledgements}
We thank Tudor P{$\rm\breve{a}$}durariu for his deep mathematical insights and helpful comments on style and Shivang Jindal for very useful and interesting discussions.

\section{Recollections}

\subsection{Quiver varieties}

Let $Q$ be an oriented quiver with vertex set $I$ and  $\alpha_{i,j}\in \ZZ_{\geq 0}$ arrows from $i\in I$ to $j\in J$. {\it We assume that $Q$ is symmetric.} For a fixed dimension vector $\gamma = \left(\gamma_i \right)\in \ZZ^I$ we consider the space 
$$\Mscr_\gamma:=\prod_{{i,j}}\CC^{\alpha_{i,j}\gamma_i \gamma_j}$$
 of $Q$-representations with dimension vector $\gamma$. This carries naturally  an action of the algebraic group $G_\gamma:=\prod_{i\in I} \mathrm{GL}_{\gamma_i}\left(\CC\right)$.

\subsection{Cohomological and K-theoretical Hall algebras}

Fix two dimension vectors $\gamma _1,\gamma _2 \in \ZZ ^I_{\geq 0}$ and put $\gamma =\gamma _1+\gamma _2$. Consider the affine subspace $M_{\gamma _1,\gamma _2}\subset M_\gamma$, which consists of representations for which the standard subspaces $\CC ^{\gamma _1^i}\subset \CC ^{\gamma ^i}$ form a subrepresentation. The subspace $M_{\gamma _1,\gamma _2}$ is preserved by the action of the parabolic subgroup $G_{\gamma _1,\gamma _2}\subset G_\gamma$ which consists of transformations preserving subspaces $\CC ^{\gamma _1^i}\subset \CC ^{\gamma ^i}$, $i\in I$.
We have the natural morphisms of stacks
\begin{equation}\label{eq:morphisms_inducing_multiplication}M_{\gamma _1}/G_{\gamma _1}\times M_{\gamma _2}/G_{\gamma _2}\stackrel{p}{\leftarrow} M_{\gamma _1,\gamma _2}/G_{\gamma _1,\gamma _2}\stackrel{i}{\to }M_{\gamma}/G_{\gamma _1,\gamma _2}\stackrel{\pi}{\to }M_{\gamma}/G_{\gamma }.
\end{equation}
The maps $i$ and $\pi$ are proper. 

We will also look at the stacks where we only act with tori, i.e. $M_\gamma/T_\gamma$ where $T_\gamma$ is the diagonal maximal torus in $G_\gamma$ and denote the corresponding maps by $\tilde{p},\tilde{i}$. Note that $\pi$ is the identity in this case.

\medskip

We denote ${\Hscr}_\gamma:=H^\bullet _{G_{\gamma}}(M_{\gamma},\QQ)$, ${\Rscr}_\gamma:=K_0^{G_{\gamma}}(M_{\gamma},\QQ)$. 
Moreover, we denote $\tilde{\Hscr}_\gamma:=H^\bullet _{T_{\gamma}}(M_{\gamma},\QQ)$, $\tilde{\Rscr}_\gamma:=K_0^{T_{\gamma}}(M_{\gamma},\QQ)$. 
Set $\Hscr=\oplus_\gamma \Hscr_\gamma, \Rscr=\oplus_\gamma \Rscr_\gamma$, and analogously for $\tilde{\Hscr},\tilde{\Rscr}$. 

As we will use Chern characters to compare these two Hall algebras we need to pass to the completed cohomological Hall algebra $\widehat{\Hscr}$
given by $$\widehat{\Hscr}_\gamma \cong \widehat{H}_{G_\gamma}\left(M_\gamma,\QQ\right):=\prod_{n\in \NN}H^n_{G_\gamma}\left(M_\gamma,\QQ\right).$$

\subsubsection{Product on $\Hscr$}
We define the multiplication
$$m_{\gamma _1,\gamma _2}:\cH _{\gamma _1}\otimes \cH _{\gamma _2}\to \cH _{\gamma}$$
as the composition of the isomorphism
$$p^*:H^\bullet _{G_{\gamma _1}}(M_{\gamma _1},\QQ)\otimes
H^\bullet _{G_{\gamma _2}}(M_{\gamma _2},\QQ)\stackrel{\sim}{\to}H^\bullet _{G_{\gamma _1,\gamma _2}}(M_{\gamma _1,\gamma _2},\QQ)$$
with the push forward maps $i_*$ and $\pi _*$.

We define the multiplication $\tilde{m}_{\gamma_1,\gamma_2}$ on $\tilde{\Hscr}$ as the composition $\tilde{i}_*\tilde{p}^*$. 

We denote the multiplications also by $\cdot_\Hscr,\cdot_{\tilde{\Hscr}}$.

\subsubsection{Product on $\Rscr$}
We define the multiplication 
\[\mu_{\gamma_1,\gamma_2}:\Rscr_{\gamma _1}\times \Rscr _{\gamma_2}\to \Rscr_{\gamma_1+\gamma_2}
\] as the composition of the induced maps
$$\mu _{\gamma_1,\gamma_2}=\pi _* i_* p^*:K_0^{G_{\gamma_1}}(M_{\gamma_1})\otimes K_0^{G_{\gamma _2}}(M_{\gamma_2})\to
K_0^{G_\gamma }(M_\gamma).$$

We define the multiplication $\tilde{\mu}_{\gamma_1,\gamma_2}$ on $\tilde{\Rscr}$ as the composition $\tilde{i}_*\tilde{p}^*$. 

We denote the multiplications also by $\cdot_\Rscr,\cdot_{\tilde{\Rscr}}$.

\subsection{Square roots of power series}

Following \cite[\S 2.3]{Caldararu2005} there exists for every power series in $c_1,c_2,\cdots$ a unique power series expansion 
$$\sqrt{1+c_1+c_2+\cdots}=1+ \frac{1}{2}c_2+\frac{1}{8}\left(4c_2 -c_1^2\right)+\frac{1}{16}\left(8c_3 -4c_1c_2+c_1^3\right)+\cdots$$
such that 
\begin{align*}
\sqrt{1}&=1\\
\sqrt{\mu}\sqrt{\eta}&=\sqrt{\mu \eta}\\
\sqrt{\mu}^2&=\mu.
\end{align*}
for any elements $\eta,\mu \in \widehat{H}^{even}_G\left(X,\QQ\right)$ with constant term $1$.

\subsection{Grothendieck-Riemann-Roch for stacks}

Similar to \cite{Padurariu} we use the construction of \cite{Krishna2014} to get a Grothendieck-Riemann-Roch Theorem for comparing $G$-theory, 
respectively $K$-theory, of quotient stacks with their Chow rings. 
This construction can then be combined with Totaro's studies of the Chow ring of classifying spaces \cite{Totaro}
to get a Chern character map commuting up to a Todd class with push forwards along proper morphisms $f:X\to Y$,

$$\tikz[xscale=3,yscale=2]{
\node (HX) at (0,0) {$\widehat{H^*_G}\left(X,\CC\right)$};
\node (HY) at (1,0) {$\widehat{H^*_G}\left(Y,\CC\right).$};
\node (KY) at (1,1) {$K_G\left(Y\right)$};
\node (KX) at (0,1) {$K_G\left(X\right)$};

\draw[->]
(KX) edge node[left] {$Td_X\mathrm{ch}$}(HX)
(KX) edge node[above]{$f_*$}(KY)
(KY) edge node[right] {$Td_Y\mathrm{ch}$} (HY)
(HX) edge node[above] {$f_*$} (HY);
}$$

Moreover, the Chern character commutes with pullback. 

\subsection{Todd classes}
The Todd class on $X/G$ can be computed using the definition of $T_{X/G}$ as the complex
\[
\gscr\otimes \Oscr_X \to T_X
\]
with $T_X$ in degree $0$. The action of $G$ on $\gscr$ is the adjoint action. Therefore 
\[
Td_{X/G}=Td_X Td_\gscr^{-1}.
\]

\subsection{Twisted multiplication}\label{sec:twisting_systems}

We refer to \cite{Zhang1996} (or \cite[\S.4]{Lunts2024} for a brief review) for twisted multiplications on an algebra in its utmost generality. 
Here we only focus on a special case that suffices for our application.

Assume that $\Gamma$ is an abelian semigroup and $A$ a $\Gamma$-graded algebra. Let ${\rm Aut}(A)$ denote the group of degree preserving algebra automorphisms of $A$ and $\sigma:\Gamma\to {\rm Aut}(A)$, $g\mapsto \sigma_g$, be a homomorphism of semigroups. Then we can define two new (associative) multiplications on $A$, by setting 
\[
a\circ b=\sigma_r(a)b, \quad a\circ b=a\sigma_l(b)
\]
for all $a\in A_l$, $b\in A_r$.

\section{Comparing multiplication on KHA and CoHA}
\subsection{Todd classes for representation stacks}
We have by \cite[Remark 4.3]{Belmans2024}
\begin{align*}
Td_{M_\gamma/T_\gamma}&=\prod_{i,j\in I}\prod_{\alpha_1=1}^{\gamma^i}
\prod_{\alpha_1\neq \alpha_2=1}^{\gamma^j}\left(\frac{x_{i,\alpha_1}-x_{j,\alpha_2}}{1-e^{x_{j,\alpha_2}-x_{i,\alpha_1}}}\right)^{a_{ij}},\\
Td_{M_\gamma/G_\gamma}&= Td_{M_\gamma/T_\gamma} \prod_{i\in I}\prod_{\alpha_1\neq \alpha_2=1}^{\gamma^i}\frac{1-e^{x_{i,\alpha_2}-x_{i,\alpha_1}}}{x_{i,\alpha_1}-x_{i,\alpha_2}}.
\end{align*}

We denote 
\[
Td_{G_\gamma}:=\prod_{i\in I}\prod_{\alpha_1\neq \alpha_2=1}^{\gamma^i}\frac{x_{i,\alpha_1}-x_{i,\alpha_2}}{1-e^{x_{i,\alpha_2}-x_{i,\alpha_1}}}.
\]

 Let 
\begin{align*}
Td_{\overline{M_{{\gamma_1\gamma_2}}}}(x',x'')&:= \prod_{i,j\in I}\prod_{\alpha_1=1}^{\gamma_1^i}\prod_{\alpha_2=1}^{\gamma_2^j}\left(\frac{x'_{i,\alpha_1}-x''_{j,\alpha_2}}{1-e^{x''_{j,\alpha_2}-x'_{i,\alpha_1}}}\right)^{a_{ij}},\\
Td_{\overline{G_{{\gamma_1\gamma_2}}}}(x',x'')&:=\prod_{i\in I}\prod_{\alpha_1=1}^{\gamma_1^i}\prod_{\alpha_2=1}^{\gamma_2^i}\frac{x'_{i,\alpha_1}-x''_{i,\alpha_2}}{1-e^{x''_{i,\alpha_2}-x'_{i,\alpha_1}}}.
\end{align*}

We denote ${\bf{x}}_{\gamma^i}=\sum_{\alpha=1}^{\gamma^i}x_{i,\alpha}$. 
Let 
\[\tilde{b}_{\gamma_2}^{\gamma_1}:=\prod_{i\in I}exp({\bf{x}}_{\gamma_1^i})^{\sum_{j\in I}a_{ij}\gamma_2^j}, \quad d_{\gamma_2}^{\gamma_1}:=\prod_{i\in I}exp({\bf{x}}_{\gamma_1^i})^{\gamma_2^i}.\] 
Note
\begin{align}\label{Td12vsTd21}
Td_{\overline{M_{{\gamma_1\gamma_2}}}}(x',x'')Td_{\overline{M_{{\gamma_2\gamma_1}}}}(x'',x')^{-1}
&=\frac{\tilde{b}_{\gamma_2}^{\gamma_1}(x')}{\tilde{b}_{\gamma_1}^{\gamma_2}(x'')},\\\label{TdG12vs21}
Td_{\overline{G_{{\gamma_1\gamma_2}}}}(x',x'')Td_{\overline{G_{{\gamma_2\gamma_1}}}}(x'',x')^{-1}
&=\frac{d_{\gamma_2}^{\gamma_1}(x')}{d_{\gamma_1}^{\gamma_2}(x'')}.
\end{align}

If $\gamma=\gamma_1+\gamma_2$ and $x'_{i,\alpha}=x_{i,\alpha}$, $1\leq \alpha\leq \gamma_1^i$, $x''_{i,\alpha}=x_{i,\gamma_1^i+\alpha}$,  $1\leq \alpha\leq \gamma_2^i$, $i,j\in I$, we get 
\begin{equation}\label{eq:Todd_split}
Td_{M_\gamma/T_\gamma}=Td_{M_{\gamma_1}/T_{\gamma_1}}(x')Td_{M_{\gamma_2}/T_{\gamma_2}}(x'')Td_{\overline{M_{{\gamma_1\gamma_2}}}}(x',x'')Td_{\overline{M_{{\gamma_2\gamma_1}}}}(x'',x'),
\end{equation}
\begin{multline}\label{eq:Todd_split_G}
Td_{M_\gamma/G_\gamma}=Td_{M_{\gamma_1}/G_{\gamma_1}}(x')Td_{M_{\gamma_2}/G_{\gamma_2}}(x'')Td_{\overline{M_{{\gamma_1\gamma_2}}}}(x',x'')Td_{\overline{M_{{\gamma_2\gamma_1}}}}(x'',x')\\
\cdot Td_{\overline{G_{\gamma_1\gamma_2}}}(x',x'')^{-1}Td_{\overline{G_{\gamma_2\gamma_1}}}(x'',x')^{-1}.
\end{multline}
For simplicity, below we will omit the variables, when they are clear from the context.

\subsection{Properties of multiplications and Chern character}
For further reference we single out important properties of the multiplications and Chern character on $\tilde{\Hscr},\tilde{\Rscr}$. 

Let $\Sscr=\oplus_\gamma H_{T_\gamma}^\bullet(\cdot,\QQ)$ and let $q:\cdot/T_\gamma\to \cdot/T_{\gamma_1}\times \cdot/T_{\gamma_2}$ for $\gamma=\gamma_1+\gamma_2$. Then we make $\Sscr$ into an algebra by defining the multiplication via the map induced by $p^*$. 

Note that there is a module action of $\Sscr_{\gamma}$ on $\tilde{\Hscr}_{\gamma}$ induced by the projection $pr:M_{\gamma}/T_{\gamma}\to \cdot/T_{\gamma}$.

\begin{lemma}\label{lem:mult_prop_T}
Let $f_1\in \tilde{\Hscr}_{\gamma_1}$, $f_2\in \tilde{\Hscr}_{\gamma_2}$, $t_1\in \Sscr_{\gamma_1}$, $t_2\in \Sscr_{\gamma_2}$. Then 
\[
(t_1f_1)\cdot_{\tilde{\Hscr}}(t_2f_2)=(t_1\cdot_{\Sscr}t_2)(f_1\cdot_{\tilde{\Hscr}}f_2).
\]
\end{lemma}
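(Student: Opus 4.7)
The lemma asserts that the module action of $\Sscr$ on $\tilde\Hscr$ is compatible with the Hall multiplication on both factors. My plan is to reduce this to the commutativity of one square of stacks plus a single application of the projection formula, nothing more.

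First I would set up notation. Write $pr_\gamma\colon M_\gamma/T_\gamma\to \cdot/T_\gamma$, $pr_{\gamma_i}\colon M_{\gamma_i}/T_{\gamma_i}\to \cdot/T_{\gamma_i}$, and $pr_{12}\colon M_{\gamma_1,\gamma_2}/T_\gamma\to \cdot/T_\gamma$ for the three projections to classifying stacks in sight, so that the $\Sscr$-action on $\tilde\Hscr_\beta$ is $t\cdot f=pr_\beta^*(t)\cup f$ (with $\beta$ any of $\gamma,\gamma_1,\gamma_2$), and so that $t_1\cdot_\Sscr t_2=q^*(t_1\boxtimes t_2)$. Using $T_\gamma=T_{\gamma_1}\times T_{\gamma_2}$ I would verify that the square
\[
\begin{tikzcd}
M_{\gamma_1,\gamma_2}/T_\gamma \ar[r, "\tilde p"] \ar[d, "pr_{12}"'] & M_{\gamma_1}/T_{\gamma_1}\times M_{\gamma_2}/T_{\gamma_2} \ar[d, "pr_{\gamma_1}\times pr_{\gamma_2}"]\\
\cdot/T_\gamma \ar[r, "q"'] & \cdot/T_{\gamma_1}\times \cdot/T_{\gamma_2}
\end{tikzcd}
\]
commutes and that $pr_{12}=pr_\gamma\circ\tilde i$; both are immediate from the definitions of the stacks and maps.

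Next I would carry out the calculation. Unfolding $\tilde m_{\gamma_1,\gamma_2}=\tilde i_*\tilde p^*$ and using that $\tilde p^*$ is a ring homomorphism, the left-hand side becomes $\tilde i_*\bigl(\tilde p^*(pr_{\gamma_1}^*t_1\boxtimes pr_{\gamma_2}^*t_2)\cup \tilde p^*(f_1\boxtimes f_2)\bigr)$. Commutativity of the square rewrites the first factor as $pr_{12}^*q^*(t_1\boxtimes t_2)=\tilde i^* pr_\gamma^*(t_1\cdot_\Sscr t_2)$, and the projection formula for the proper map $\tilde i$ pulls it past $\tilde i_*$, yielding $pr_\gamma^*(t_1\cdot_\Sscr t_2)\cup\tilde i_*\tilde p^*(f_1\boxtimes f_2)=(t_1\cdot_\Sscr t_2)(f_1\cdot_{\tilde\Hscr}f_2)$, which is the right-hand side.

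I do not anticipate any real obstacle: the entire content is the commutativity of the displayed square, essentially bookkeeping on how the maximal torus of $G_\gamma$ splits through the parabolic, while everything else is a formal manipulation with pullbacks, cup products and the projection formula. An analogous statement for $\tilde\Rscr$ in place of $\tilde\Hscr$ should follow by the same argument, substituting derived tensor product for the cup product.
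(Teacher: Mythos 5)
Your proof is correct and is essentially the paper's own argument: the paper's (alternative) proof also rewrites the left-hand side as $\tilde i_*\bigl(\tilde p^*(pr^*t_1,pr^*t_2)\cup\tilde p^*(f_1,f_2)\bigr)$, uses the identity $(pr,pr)\circ\tilde p=q\circ pr\circ\tilde i$ (your commuting square together with $pr_{12}=pr_\gamma\circ\tilde i$) to identify the first factor with $\tilde i^*pr^*q^*(t_1,t_2)$, and concludes by the projection formula for $\tilde i$. The only difference is cosmetic bookkeeping of the projections; the paper additionally remarks that the statement can also be read off from the explicit shuffle-type formulas of Kontsevich--Soibelman.
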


\begin{proof}
This follows by explicit formulas for multiplication \cite{KS}.\footnote{Note that in loc.cit., the explicit formulas for the multiplication are given in the $G$-equivariant setting, here they are simpler, we do not need shuffle, and we do not divide by $\prod_{i\in I}\prod_{\alpha_1}^{\gamma_1^i}\prod_{\alpha_2=1}^{\gamma_2^i}(x''_{i,\alpha_2}-x'_{i,\alpha_1})$.}

Alternatively, the two sides can be written as follows. The left hand side is equal to 
\[
\tilde{i}_*\tilde{p}^*(pr^* t_1 \cup f_1,pr^* t_2\cup f_2),
\]
while the right-hand side equals
\[
pr^*q^*(t_1,t_2)\cup \tilde{i}_*\tilde{p}^*(f_1,f_2).
\]
Note that $(pr,pr)\circ \tilde{p}=q\circ pr\circ i$. We then have 
\begin{align*}
\tilde{i}_*\tilde{p}^*(pr^* t_1 \cup f_1,pr^* t_2\cup f_2)&=\tilde{i}_*(\tilde{p}^*(pr^*t_1,pr^*t_2)\cup \tilde{p}^*(f_1,f_2))\\
&=\tilde{i}_*(\tilde{i}^*pr^*q^*(t_1,t_2)\cup \tilde{p}^*(f_1,f_2)),
\end{align*}
which equals the right-hand side by the projection formula. 
\end{proof}

\begin{lemma}\label{lem:chern_action}
	The Chern character $ch:\tilde{\Rscr}\to \tilde{\Rscr}$ commutes with the action of $S_n$. 
\end{lemma}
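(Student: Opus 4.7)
The plan is to reduce to the classifying stack $\mathrm{pt}/T_\gamma$ and invoke naturality of the Chern character under group automorphisms. First, I would observe that since $M_\gamma$ is a $T_\gamma$-representation, the structure map $\pi:M_\gamma\to \mathrm{pt}$ induces $S_n$-equivariant isomorphisms $\pi^*:K_0^{T_\gamma}(\mathrm{pt})\xrightarrow{\sim} \tilde{\Rscr}_\gamma$ and $\pi^*:H^*_{T_\gamma}(\mathrm{pt})\xrightarrow{\sim} \tilde{\Hscr}_\gamma$ — the former because $M_\gamma\to \mathrm{pt}$ is a $T_\gamma$-equivariant vector bundle, the latter by homotopy invariance. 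Since the Chern character commutes with pullback by the discussion recalled in Section~2.4, this reduces the claim to $S_n$-equivariance of the Chern character $ch:R(T_\gamma)\to \widehat{H^*_{T_\gamma}(\mathrm{pt})}_\QQ$ at the classifying stack.

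Next, I would unwind the definitions at the point. The map is $R(T_\gamma)=K_0^{T_\gamma}(\mathrm{pt})\to \widehat{\mathrm{Sym}}(\mathfrak{t}_\gamma^*)_\QQ$ given on characters by $ch(e^\chi)=\exp(d\chi)$, where $d\chi\in \mathfrak{t}_\gamma^*$ is the differential of $\chi$. The group $S_n$ acts on $T_\gamma$ by conjugation via its lift to $N_{G_\gamma}(T_\gamma)$; this induces compatible actions on the character lattice $\Hom(T_\gamma,\mathbb{G}_m)$ and on $\mathfrak{t}_\gamma^*$, intertwined by differentiation, $d(w\cdot \chi)=w\cdot d\chi$. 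Therefore
\[
ch(w\cdot e^\chi)=\exp\bigl(d(w\chi)\bigr)=\exp(w\cdot d\chi)=w\cdot \exp(d\chi)=w\cdot ch(e^\chi),
\]
and since characters span $R(T_\gamma)\otimes \QQ$, equivariance extends by $\QQ$-linearity to all of $\tilde{\Rscr}_\gamma$.

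I do not expect any serious obstacle: the entire argument is naturality of $ch$ applied to the automorphism $c_w:T_\gamma\to T_\gamma$ for each $w$ in a chosen set of representatives in $N_{G_\gamma}(T_\gamma)$. The only thing to double-check is that both sides indeed use the same $S_n$-action — namely the one induced by inner automorphisms of $G_\gamma$ — which is the standard definition of the Weyl group action on $T$-equivariant (co)homology and is compatible with restriction from $G_\gamma$-equivariant theories.
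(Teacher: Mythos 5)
Your proof is correct and is essentially the paper's argument: the paper also identifies $\tilde{\Rscr}_\gamma$ with (Laurent) polynomials in the torus variables and notes that $ch$ sends $f(z_1,\dots,z_n)$ to $f(e^{x_1},\dots,e^{x_n})$, so that the permutation action on variables commutes with it. The only difference is presentational — you make explicit the homotopy-invariance reduction to $\mathrm{pt}/T_\gamma$ and phrase the check on characters via $ch(e^\chi)=\exp(d\chi)$, which the paper does implicitly by writing classes as functions of the $z_i$.
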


\begin{proof}
Let $f(z_1,...,z_n)\in \tilde{\Rscr}$. Then we have 
$$ ch \left(f\right)=\mathrm{exp}\left(f\left(x_1,...,x_n\right)\right) \in \widehat{{\tilde{\Hscr}}}. $$
In particular we get for $s\in S_n$:
\begin{align*}
ch \left(s f \right)&=\mathrm{exp}\left(s f\left(x_1,...,x_n\right)\right)\\
&=\mathrm{exp}\left( f\left(x_{s1},...,x_{sn}\right)\right)\\
&=\mathrm{exp}\left( f\right)\left(x_{s1},...,x_{sn}\right)\\
&=s\mathrm{exp}\left(f\right)\left(x_1,...,x_n\right)\\
&=s \; ch \left(f\right)
\end{align*}
and so the Chern character is compatible with the action of the symmetric group.
\end{proof}

\subsection{T-equivariant comparison}\label{sec:toric_comparison}
We first work $T$-equivariently. 

Let us denote 
\[\tilde{v}:\tilde{\Rscr}_{\gamma}\to \widehat{\tilde{\Hscr}}_{\gamma},f\mapsto ch(f)Td_{M_\gamma/T_\gamma}^{1/2}.\]
\begin{lemma}\label{lem:mult_Chern}
Let $f_1\in \tilde{\Rscr}_{\gamma_1}$, $f_2\in \tilde{\Rscr}_{\gamma_2}$. We have
\[
\tilde{v}(f_1\cdot_{\tilde{\Rscr}} f_2)=(\tilde{b}_{\gamma_2}^{\gamma_1})^{1/2}\tilde{v}(f_1)\cdot_{\tilde{\Hscr}} (\tilde{b}_{\gamma_1}^{\gamma_2})^{-1/2}\tilde{v}(f_2).
\]
\end{lemma}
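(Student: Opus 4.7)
The plan is to expand both sides of the asserted equality into a common form by combining Grothendieck--Riemann--Roch for the closed embedding $\tilde{i}$ with the Todd splitting \eqref{eq:Todd_split} and the ratio identity \eqref{Td12vsTd21}.

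First, I unpack the left-hand side. Since $\tilde{v}(f) = ch(f)\,Td_{M_\gamma/T_\gamma}^{1/2}$ and $f_1\cdot_{\tilde{\Rscr}} f_2 = \tilde{i}_*\tilde{p}^*(f_1\otimes f_2)$, the compatibility of $ch$ with pullback, together with GRR for the proper map $\tilde{i}$ and the projection formula applied to $Td_{M_\gamma/T_\gamma}^{1/2}$, yields
\[
\tilde{v}(f_1\cdot_{\tilde{\Rscr}} f_2) = \tilde{i}_*\!\left(\tilde{p}^*(ch(f_1)\otimes ch(f_2))\cdot Td_{M_{\gamma_1,\gamma_2}/T_\gamma}\cdot \tilde{i}^*Td_{M_\gamma/T_\gamma}^{-1/2}\right).
\]
For the right-hand side, I use Lemma~\ref{lem:mult_prop_T} to pull the $\Sscr$-factors $(\tilde{b}_{\gamma_2}^{\gamma_1})^{1/2}$ and $(\tilde{b}_{\gamma_1}^{\gamma_2})^{-1/2}$ out of $\cdot_{\tilde{\Hscr}}$, and the projection formula once more to bring that prefactor under $\tilde{i}_*$, producing
\[
(\tilde{b}_{\gamma_2}^{\gamma_1})^{1/2}\tilde{v}(f_1)\cdot_{\tilde{\Hscr}}(\tilde{b}_{\gamma_1}^{\gamma_2})^{-1/2}\tilde{v}(f_2)
= \tilde{i}_*\!\left(\tilde{p}^*(ch(f_1)\otimes ch(f_2))\cdot (Td_{M_{\gamma_1}/T_{\gamma_1}}Td_{M_{\gamma_2}/T_{\gamma_2}})^{1/2}\cdot \left(\frac{\tilde{b}_{\gamma_2}^{\gamma_1}(x')}{\tilde{b}_{\gamma_1}^{\gamma_2}(x'')}\right)^{1/2}\right).
\]

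Comparing integrands, the lemma reduces to the purely Todd-theoretic identity
\[
Td_{M_{\gamma_1,\gamma_2}/T_\gamma}\cdot \tilde{i}^*Td_{M_\gamma/T_\gamma}^{-1/2}
= (Td_{M_{\gamma_1}/T_{\gamma_1}}Td_{M_{\gamma_2}/T_{\gamma_2}})^{1/2}\cdot \left(\frac{\tilde{b}_{\gamma_2}^{\gamma_1}(x')}{\tilde{b}_{\gamma_1}^{\gamma_2}(x'')}\right)^{1/2}.
\]
Splitting $\tilde{i}^*Td_{M_\gamma/T_\gamma}$ via \eqref{eq:Todd_split} and then passing to square roots, this further reduces to
\[
Td_{M_{\gamma_1,\gamma_2}/T_\gamma} = Td_{M_{\gamma_1}/T_{\gamma_1}}(x')\,Td_{M_{\gamma_2}/T_{\gamma_2}}(x'')\,Td_{\overline{M_{\gamma_1\gamma_2}}}(x',x''),
\]
after which \eqref{Td12vsTd21} delivers the required $\tilde{b}$-ratio by combining with the remaining $Td_{\overline{M_{\gamma_2\gamma_1}}}(x'',x')^{-1/2}$ factor from the split.

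I expect this last identity to be the main obstacle. The $T_\gamma$-equivariant tangent bundle of the parabolic subspace $M_{\gamma_1,\gamma_2}$ decomposes as the tangent bundles of $M_{\gamma_1}$ and $M_{\gamma_2}$ together with the surviving off-diagonal block $\bigoplus_{i,j}\Hom(\CC^{\gamma_2^i},\CC^{\gamma_1^j})^{a_{ij}}$ of \emph{cross arrows} still allowed by the parabolic condition. Matching the Todd class of that block with $Td_{\overline{M_{\gamma_1\gamma_2}}}(x',x'')$ (rather than $Td_{\overline{M_{\gamma_2\gamma_1}}}(x'',x')$) requires swapping the summation indices $i\leftrightarrow j$ and thus invokes the quiver symmetry $a_{ij}=a_{ji}$ in an essential way; it is precisely this symmetry that makes the two cross Todd factors align into the ratio appearing in \eqref{Td12vsTd21}.
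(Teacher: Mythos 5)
Your proposal is correct and follows essentially the same route as the paper: Grothendieck--Riemann--Roch for the proper map $\tilde{i}$ combined with the projection formula, the splitting \eqref{eq:Todd_split}, the ratio identity \eqref{Td12vsTd21}, and Lemma \ref{lem:mult_prop_T}. The only difference is presentational: you compare integrands under $\tilde{i}_*$ and make explicit the identification $Td_{M_{\gamma_1,\gamma_2}/T_\gamma}=Td_{M_{\gamma_1}/T_{\gamma_1}}Td_{M_{\gamma_2}/T_{\gamma_2}}Td_{\overline{M_{\gamma_1\gamma_2}}}$ (where the symmetry $a_{ij}=a_{ji}$ indeed enters), which the paper uses implicitly when it rewrites the GRR correction term as $Td_{\overline{M_{\gamma_2\gamma_1}}}^{-1}$.
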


\begin{proof}
We use the properties of the Chern character to deduce
\begin{align*}
ch(\tilde{\mu}_{\gamma_1,\gamma_2}(f_1,f_2))&=ch(\tilde{i}_*\tilde{p}^*(f_1,f_2))\\&=Td_{M_\gamma/T_{\gamma}}^{-1}\tilde{i}_*Td_{M_{\gamma_1\gamma_2}/T_\gamma}\tilde{i}_*\tilde{p}^*ch(f_1,f_2)\\
&=Td_{\overline{M_{\gamma_2\gamma_1}}}^{-1}\tilde{m}_{\gamma_1,\gamma_2}(ch(f_1),ch(f_2)).
\end{align*}

 We obtain, using \eqref{eq:Todd_split} and Lemma \ref{lem:mult_prop_T},
\begin{align*}
\tilde{v}(f_1\cdot_{\tilde{\Rscr}} f_2)&=Td_{M_{\gamma_1+\gamma_2}/T_{\gamma_1+\gamma_2}}^{1/2}Td_{\overline{M_{\gamma_2\gamma_1}}}^{-1} ch(f_1)\cdot_{\tilde{\Hscr}} ch(f_2)
\\&=Td_{M_{\gamma_1+\gamma_2}/T_{\gamma_1+\gamma_2}}^{1/2}Td_{\overline{M_{\gamma_2\gamma_1}}}^{-1} Td_{M_{\gamma_1/T_{\gamma_1}}}^{-1/2}Td_{M_{\gamma_2/T_{\gamma_2}}}^{-1/2}\tilde{v}(f_1)\cdot_{\tilde{\Hscr}} \tilde{v}(f_2)
\\&=Td_{\overline{M_{\gamma_2\gamma_1}}}^{-1} Td_{\overline{M_{\gamma_2 \gamma_1}}}^{1/2}Td_{\overline{M_{\gamma_1\gamma_2}}}^{1/2}\tilde{v}(f_1)\cdot_{\tilde{\Hscr}} \tilde{v}(f_2) 
\\&= Td_{\overline{M_{\gamma_2\gamma_1}}}^{-1/2}Td_{\overline{M_{\gamma_1\gamma_2}}}^{1/2} \tilde{v}(f_1)\cdot_{\tilde{\Hscr}} \tilde{v}(f_2) 
\\&=(\tilde{b}_{\gamma_2}^{\gamma_1})^{1/2}(\tilde{b}_{\gamma_1}^{\gamma_2})^{-1/2} \tilde{v}(f_1)\cdot_{\tilde{\Hscr}} \tilde{v}(f_2)
\\&=(\tilde{b}_{\gamma_2}^{\gamma_1})^{1/2}\tilde{v}(f_1)\cdot_{\tilde{\Hscr}} (\tilde{b}_{\gamma_1}^{\gamma_2})^{-1/2}\tilde{v}(f_2).
\end{align*}
\end{proof}

Denote $\tilde{c}_{\tau}^{\gamma}=(\tilde{b}_{\tau}^{\gamma})^{1/2}$. Then
\begin{align*}
\tilde{c}_{\tau}^{\gamma_1}({\bf x'})\tilde{c}_{\tau}^{\gamma_2}({\bf x''})&=
\tilde{c}_{\tau}^{\gamma_1+\gamma_2}({\bf x'},{\bf x''}),\\\nonumber
\tilde{c}_{\tau_1}^\gamma \tilde{c}_{\tau_2}^\gamma&=\tilde{c}_{\tau_1+\tau_2}^\gamma.
\end{align*}
The same formulas hold for $(\tilde{c}_\tau^\gamma)^{-1}$. These implies that the map $\tau\mapsto (f_\gamma\mapsto \tilde{c}_\tau^\gamma f_\gamma)_{\gamma \in \ZZ^I}$ defines a homomorphism of semi-groups $\ZZ^I\to {\rm Aut}(\tilde{\Hscr})$. 

Hence we can define a new (associative) product on $\tilde{\Hscr}$ (see \S\ref{sec:twisting_systems}); i.e. for $f_i\in \tilde{\Hscr}_{\gamma_i}$, $i=1,2$, we set
\[
f_1\circ f_2:=\tilde{c}_{\gamma_2}^{\gamma_1} f_1\cdot_{\tilde{\Hscr}} (\tilde{c}_{\gamma_1}^{\gamma_2})^{-1} f_2.
\]

\begin{corollary}
The map $\tilde{v}$ is an injective morphism of rings from $(\tilde{\Rscr},\mu)$ to $(\widehat{\tilde{\Hscr}},\circ)$. 
\end{corollary}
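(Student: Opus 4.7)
The plan is to observe that the corollary is essentially an immediate consequence of Lemma \ref{lem:mult_Chern} together with the definition of the twisted product $\circ$. Specifically, Lemma \ref{lem:mult_Chern} asserts
\[
\tilde{v}(f_1\cdot_{\tilde{\Rscr}} f_2)=(\tilde{b}_{\gamma_2}^{\gamma_1})^{1/2}\tilde{v}(f_1)\cdot_{\tilde{\Hscr}} (\tilde{b}_{\gamma_1}^{\gamma_2})^{-1/2}\tilde{v}(f_2),
\]
and since by definition $\tilde{c}_\tau^\gamma=(\tilde{b}_\tau^\gamma)^{1/2}$, the right-hand side equals $\tilde{v}(f_1)\circ \tilde{v}(f_2)$. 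To interpret this matching rigorously, I would note that the classes $(\tilde{b}_\tau^\gamma)^{\pm 1/2}$ lie in the image of the pullback from $BT_\gamma$, that is, in the subalgebra $\Sscr_\gamma$ acting on $\tilde{\Hscr}_\gamma$, so Lemma \ref{lem:mult_prop_T} identifies the raw product appearing in Lemma \ref{lem:mult_Chern} with the twisted product $\circ$ defined in Section \ref{sec:toric_comparison}.

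For injectivity, I would use that each $M_\gamma$ is an affine space, hence $T_\gamma$-equivariantly contractible to a point. Therefore
\[
\tilde{\Rscr}_\gamma\cong R(T_\gamma)\otimes\QQ\cong \QQ[z_{i,\alpha}^{\pm 1}],\qquad \widehat{\tilde{\Hscr}}_\gamma\cong \QQ[[x_{i,\alpha}]],
\]
and the Chern character acts as $z_{i,\alpha}\mapsto e^{x_{i,\alpha}}$. Linear independence of distinct exponentials $e^{\sum n_{i,\alpha} x_{i,\alpha}}$ inside the formal power series ring makes $ch$ injective; multiplication by $Td_{M_\gamma/T_\gamma}^{1/2}$, which is a unit with constant term $1$, then preserves injectivity.

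I do not anticipate any genuine obstacle. The only mild point requiring care is the bookkeeping in the first paragraph: one must be explicit that the twisting factors $\tilde{c}_\tau^\gamma$ arising in Lemma \ref{lem:mult_Chern} live in $\Sscr$ and act on $\tilde{\Hscr}$ via the pullback along the projection $M_\gamma/T_\gamma\to \cdot/T_\gamma$, which is precisely the setup making Lemma \ref{lem:mult_prop_T} applicable and therefore matching $\circ$ with the expression on the right-hand side of Lemma \ref{lem:mult_Chern}.
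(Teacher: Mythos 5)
Your proposal is correct and follows essentially the same route as the paper, which likewise deduces the ring-morphism property immediately from Lemma \ref{lem:mult_Chern} and the definition of $\circ$ (the factors $(\tilde{b}_\tau^\gamma)^{\pm 1/2}=(\tilde{c}_\tau^\gamma)^{\pm 1}$ are exactly the twisting classes in that definition). Your explicit injectivity argument via $\tilde{\Rscr}_\gamma\cong R(T_\gamma)\otimes\QQ$, linear independence of exponentials in $\QQ[[x_{i,\alpha}]]$, and invertibility of $Td_{M_\gamma/T_\gamma}^{1/2}$ is a correct filling-in of a point the paper leaves implicit.
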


\begin{proof}
The corollary follows immediately from Lemma \ref{lem:mult_Chern} and the definition of $\circ$. 
\end{proof}

\subsection{G-equivariant comparison}\label{sec:G_comparison}
In this section we establish the algebra comparison of the CoHa and KHA.

Let 
\begin{align*}
e^\Hscr_{\gamma}&=\prod_{i\in I}\prod_{\alpha=1}^{\gamma^i}\prod_{\alpha_1\neq \alpha_2=1}^{\gamma^i}{(x_{i,\alpha_2}-x_{i,\alpha_1})},\\
e^\Rscr_{\gamma}&=\prod_{i\in I}\prod_{\alpha=1}^{\gamma^i}\prod_{\alpha_1\neq \alpha_2=1}^{\gamma^i}{(1-x_{i,\alpha_1}x_{i,\alpha_2}^{-1})},
\end{align*}
and analogously,
\begin{align*}
e^\Hscr_{\gamma_1\gamma_2}&=\prod_{i\in I}\prod_{\alpha_1=1}^{\gamma_1^i}\prod_{\alpha_2=1+\gamma_1^i}^{\gamma_1^i+\gamma_2^i}{(x_{i,\alpha_2}-x_{i,\alpha_1})},\\
e^\Rscr_{\gamma_1\gamma_2}&=\prod_{i\in I}\prod_{\alpha_1=1}^{\gamma_1^i}\prod_{\alpha_2=1+\gamma_1^i}^{\gamma_1^i+\gamma_2^i}{(1-x_{i,\alpha_1}x_{i,\alpha_2}^{-1})}.
\end{align*}

We first record here an easy lemma for further reference that follows directly from definitions. 
\begin{lemma}\label{lem:eu_TdG}
We have 
\[
\frac{e^\Hscr_{\gamma_1\gamma_2}}{ch(e^\Rscr_{\gamma_1\gamma_2})}=Td_{\overline{G_{\gamma_2\gamma_1}}}.
\]
\end{lemma}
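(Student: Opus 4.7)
The plan is to reduce the identity to a direct comparison of explicit products. First I would rewrite both sides in terms of the torus weights $x_{i,\alpha}$ and exploit the fact that, on the $K$-theory side, a character such as $x_{i,\alpha_1} x_{i,\alpha_2}^{-1}$ is the class of a line bundle whose Chern character is $\exp(x_{i,\alpha_1} - x_{i,\alpha_2})$, with the $x$'s on the right now interpreted as cohomological first Chern classes. Since the Chern character is a ring homomorphism, it factors through the product defining $e^{\Rscr}_{\gamma_1\gamma_2}$ term by term, giving
\[
ch(e^\Rscr_{\gamma_1\gamma_2}) = \prod_{i\in I}\prod_{\alpha_1=1}^{\gamma_1^i}\prod_{\alpha_2=1+\gamma_1^i}^{\gamma_1^i+\gamma_2^i} \bigl(1 - e^{x_{i,\alpha_1}-x_{i,\alpha_2}}\bigr).
\]

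Dividing $e^{\Hscr}_{\gamma_1\gamma_2}$ by this product then yields
\[
\frac{e^\Hscr_{\gamma_1\gamma_2}}{ch(e^\Rscr_{\gamma_1\gamma_2})} = \prod_{i\in I}\prod_{\alpha_1=1}^{\gamma_1^i}\prod_{\alpha_2=1+\gamma_1^i}^{\gamma_1^i+\gamma_2^i} \frac{x_{i,\alpha_2}-x_{i,\alpha_1}}{1-e^{x_{i,\alpha_1}-x_{i,\alpha_2}}}.
\]

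It then remains to match this with the definition of $Td_{\overline{G_{\gamma_2\gamma_1}}}(x'',x')$. Applying the convention $x'_{i,\alpha}=x_{i,\alpha}$ for $1\le \alpha\le \gamma_1^i$ and $x''_{i,\alpha}=x_{i,\gamma_1^i+\alpha}$ for $1\le \alpha\le \gamma_2^i$, and swapping the roles of the two index sets so that $\gamma_1$ plays the role of the second slot, a substitution $\beta_1=\gamma_1^i+\alpha_1$, $\beta_2=\alpha_2$ in the defining product for $Td_{\overline{G_{\gamma_2\gamma_1}}}$ produces exactly the same expression as above. Renaming the dummy indices finishes the identification.

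The only subtle point—and the thing I would double-check—is the bookkeeping of indices: which variable plays the role of the ``primed'' and which of the ``doubly primed'' torus parameter in $Td_{\overline{G_{\gamma_2\gamma_1}}}$, and correspondingly which sign convention ($x_{i,\alpha_2}-x_{i,\alpha_1}$ vs.\ $x_{i,\alpha_1}-x_{i,\alpha_2}$ in the numerator) appears. Apart from this, the lemma is just a direct computation from the definitions.
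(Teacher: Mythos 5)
Your computation is correct and is exactly the argument the paper intends: the lemma is stated as following directly from the definitions, and your proof simply spells that out — $ch$ is a ring homomorphism sending $x_{i,\alpha_1}x_{i,\alpha_2}^{-1}$ to $e^{x_{i,\alpha_1}-x_{i,\alpha_2}}$, and the resulting quotient of products matches $Td_{\overline{G_{\gamma_2\gamma_1}}}(x'',x')$ under the standard identification $x'_{i,\alpha}=x_{i,\alpha}$, $x''_{i,\alpha}=x_{i,\gamma_1^i+\alpha}$. The only nit is the relabeling you wrote: with $\beta_1,\beta_2$ the indices in the defining product of $Td_{\overline{G_{\gamma_2\gamma_1}}}$ it should be $\alpha_2=\gamma_1^i+\beta_1$, $\alpha_1=\beta_2$ (not $\beta_1=\gamma_1^i+\alpha_1$, $\beta_2=\alpha_2$), though your final displayed product is the correct one.
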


Let us denote $\cdot_{\tilde{\Hscr}_e}:=(e^\Hscr)^{-1}\cdot_{\tilde{\Hscr}}$ and $\cdot_{\tilde{\Rscr}_e}:=(e^\Rscr)^{-1}\cdot_{\tilde{\Rscr}}$. We write 
\[v_e:\tilde{\Rscr}_\gamma\to \widehat{\tilde{\Hscr}_\gamma},\quad f\mapsto\tilde{v}(f)Td^{-1/2}_{G_\gamma}=ch(f)Td^{1/2}_{M_\gamma/G_\gamma}.
\]

\begin{lemma}\label{lem:compare_ve}
Let $f_i\in \tilde{\Hscr}_{\gamma_i}$, $i=1,2$. Then
\[
v_e(f_1\cdot_{\tilde{\Rscr}_e}f_2)=(\tilde{b}_{\gamma_2}^{\gamma_1}/d_{\gamma_2}^{\gamma_1})^{1/2}v_e(f_1)\cdot_{\tilde{\Hscr_e}}(\tilde{b}_{\gamma_1}^{\gamma_2}/d_{\gamma_1}^{\gamma_2})^{-1/2}v_e(f_2).
\]
\end{lemma}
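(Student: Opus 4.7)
The plan is to reduce Lemma \ref{lem:compare_ve} to the $T$-equivariant comparison in Lemma \ref{lem:mult_Chern} by tracking how the extra factor $Td_{G_\gamma}^{-1/2}$ in the definition $v_e=\tilde{v}\cdot Td_{G_\gamma}^{-1/2}$ splits across the two factors, and how the Euler-class correction $(e^\Rscr_{\gamma_1\gamma_2})^{-1}$ in the twisted product $\cdot_{\tilde{\Rscr}_e}$ translates under $ch$ into the analogous correction $(e^\Hscr_{\gamma_1\gamma_2})^{-1}$ together with a compensating Todd class, as furnished by Lemma \ref{lem:eu_TdG}.

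First, using multiplicativity of the Chern character together with the definition of $v_e$, I would write
\[
v_e(f_1\cdot_{\tilde{\Rscr}_e} f_2)=ch(e^\Rscr_{\gamma_1\gamma_2})^{-1}\,\tilde{v}(f_1\cdot_{\tilde{\Rscr}} f_2)\, Td_{G_\gamma}^{-1/2}.
\]
Lemma \ref{lem:mult_Chern} replaces $\tilde{v}(f_1\cdot_{\tilde{\Rscr}} f_2)$ by $(\tilde{b}_{\gamma_2}^{\gamma_1})^{1/2}\tilde{v}(f_1)\cdot_{\tilde{\Hscr}}(\tilde{b}_{\gamma_1}^{\gamma_2})^{-1/2}\tilde{v}(f_2)$, and Lemma \ref{lem:eu_TdG} rewrites $ch(e^\Rscr_{\gamma_1\gamma_2})^{-1}=(e^\Hscr_{\gamma_1\gamma_2})^{-1}Td_{\overline{G_{\gamma_2\gamma_1}}}$.

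Next, I split $Td_{G_\gamma}=Td_{G_{\gamma_1}}(x')Td_{G_{\gamma_2}}(x'')Td_{\overline{G_{\gamma_1\gamma_2}}}(x',x'')Td_{\overline{G_{\gamma_2\gamma_1}}}(x'',x')$ by the same decomposition of $\prod_{\alpha_1\neq\alpha_2}$ that underlies \eqref{eq:Todd_split_G}, and take square roots. By Lemma \ref{lem:mult_prop_T}, the factors $Td_{G_{\gamma_i}}^{-1/2}$, which depend only on the variables of $\gamma_i$, can be absorbed into $\tilde{v}(f_i)$ to produce $v_e(f_i)$. The remaining Todd contribution combines with the factor $Td_{\overline{G_{\gamma_2\gamma_1}}}$ coming from Lemma \ref{lem:eu_TdG} into $(Td_{\overline{G_{\gamma_2\gamma_1}}}/Td_{\overline{G_{\gamma_1\gamma_2}}})^{1/2}$, which by \eqref{TdG12vs21} equals $(d_{\gamma_1}^{\gamma_2}/d_{\gamma_2}^{\gamma_1})^{1/2}$.

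Finally, Lemma \ref{lem:mult_prop_T} applied once more distributes $(d_{\gamma_2}^{\gamma_1})^{-1/2}$ (a function of $x'$) onto the $f_1$-side and $(d_{\gamma_1}^{\gamma_2})^{1/2}$ (a function of $x''$) onto the $f_2$-side, combining with the existing $(\tilde{b}_{\gamma_2}^{\gamma_1})^{1/2}$ and $(\tilde{b}_{\gamma_1}^{\gamma_2})^{-1/2}$. Recognizing the leftover $(e^\Hscr_{\gamma_1\gamma_2})^{-1}\cdot_{\tilde{\Hscr}}$ as $\cdot_{\tilde{\Hscr}_e}$ yields the desired identity. The main bookkeeping obstacle is to keep the $x'$-dependent and $x''$-dependent factors on their respective sides of the non-commutative product $\cdot_{\tilde{\Hscr}}$; this is precisely what Lemma \ref{lem:mult_prop_T} allows one to do, and once that is set up the algebraic identity reduces to the ratio identity \eqref{TdG12vs21}.
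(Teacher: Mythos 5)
Your proposal is correct and follows essentially the same route as the paper: unwind the definitions of $v_e$ and $\cdot_{\tilde{\Rscr}_e}$, apply Lemma \ref{lem:mult_Chern}, split $Td_{G_\gamma}$ into the $\gamma_1$, $\gamma_2$ and cross factors (the $G$-analogue of \eqref{eq:Todd_split_G}), convert $ch(e^\Rscr_{\gamma_1\gamma_2})^{-1}$ via Lemma \ref{lem:eu_TdG}, and finish with \eqref{TdG12vs21}, distributing the $x'$- and $x''$-dependent factors with Lemma \ref{lem:mult_prop_T}. If anything, your explicit invocation of Lemma \ref{lem:mult_prop_T} for the bookkeeping is slightly more careful than the paper's condensed computation.
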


\begin{proof}
The proof follows a sequence of easy steps. The first equality below uses the definition of $v_e$, the second  Lemma \ref{lem:mult_Chern}, the third uses the definition of $v_e$ together with \eqref{eq:Todd_split_G} while the forth follows from  Lemma \ref{lem:eu_TdG} and \eqref{TdG12vs21}. 

We have 
\begin{align*}
v_e(f_1\cdot_{\Rscr_e}f_2)
&=ch((e^\Rscr_{\gamma_1+\gamma_2})^{-1})Td_{G_{\gamma_1+\gamma_2}}^{{-1/2}}\tilde{v}(f_1\cdot_{\tilde\Rscr} f_2)\\
&=ch((e^\Rscr_{\gamma_1+\gamma_2})^{-1})Td_{G_{\gamma_1+\gamma_2}}^{{-1/2}}(\tilde{b}_{\gamma_2}^{\gamma_1})^{1/2}\tilde{v}(f_1)\cdot_{\tilde{\Hscr}} (\tilde{b}_{\gamma_1}^{\gamma_2})^{-1/2}\tilde{v}(f_2)\\
&=ch((e^\Rscr_{\gamma_1+\gamma_2})^{-1})Td_{\overline{G_{\gamma_1\gamma_2}}}^{-1/2}Td_{\overline{G_{\gamma_2\gamma_1}}}^{-1/2}e^\Hscr_{\gamma_1+\gamma_2} (\tilde{b}_{\gamma_2}^{\gamma_1})^{1/2}v_e(f_1)\cdot_{\tilde{\Hscr}_e} (\tilde{b}_{\gamma_1}^{\gamma_2})^{-1/2}v_e(f_2)\\
&=(\tilde{b}_{\gamma_2}^{\gamma_1}/d_{\gamma_2}^{\gamma_1})^{1/2}v_e(f_1)\cdot_{\tilde{\Hscr}_e} (\tilde{b}_{\gamma_1}^{\gamma_2}/d_{\gamma_1}^{\gamma_2})^{-1/2}v_e(f_2).\qedhere
\end{align*}
\end{proof}

Let ${c}_{\tau}^{\gamma}=(\tilde{b}_{\tau}^{\gamma}/d_{\tau}^\gamma)^{1/2}$. Then we define a new product on $\Hscr$ (similarly as in \S\ref{sec:toric_comparison}) by setting 
\[
f_1\circ f_2:= c_{\gamma_2}^{\gamma_1}f_1\cdot_\Hscr (c_{\gamma_1}^{\gamma_2})^{-1}f_2
\]
for $f_i\in \Hscr_{\gamma_i}$, $i=1,2$.

Let 
\[v:\Rscr_\gamma\to \hat{\Hscr}_\gamma,\quad f\mapsto Td_{M_\gamma/G_\gamma}^{1/2}ch(f).\]
In order for the map $v:\Rscr_\gamma\to \Hscr_\gamma$ to be well-defined we need an easy lemma.

\begin{lemma}
We have $Td_{M_\gamma/G_\gamma}^{1/2}\in \hat{\Hscr}_\gamma$.
\end{lemma}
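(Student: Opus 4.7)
The plan is to apply the square-root construction from Section 2.3 to $Td_{M_\gamma/G_\gamma}$, viewed inside the completed polynomial ring $\widehat{\tilde{\Hscr}}_\gamma$, and then show the resulting element descends to the Weyl-group invariants $\widehat{\Hscr}_\gamma=(\widehat{\tilde{\Hscr}}_\gamma)^{W_\gamma}$, where $W_\gamma=\prod_{i\in I}S_{\gamma^i}$. This splits into two independent checks: that the square root is well-defined at all (a constant-term condition), and that it is $W_\gamma$-invariant.

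The constant-term condition is immediate from the explicit formulas at the start of Section 3.1: every factor of $Td_{M_\gamma/T_\gamma}$ and of the correction $\prod\frac{1-e^{x_{i,\alpha_2}-x_{i,\alpha_1}}}{x_{i,\alpha_1}-x_{i,\alpha_2}}$ can be rewritten as $\frac{y}{e^{y}-1}$ or $\frac{e^{y}-1}{y}$ for a difference $y$ of the $x$-variables, and each such factor has constant term $1$. Hence $Td_{M_\gamma/G_\gamma}$ has constant term $1$, and the square root recalled in Section 2.3 produces a well-defined element of $\widehat{\tilde{\Hscr}}_\gamma$.

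The key step is $W_\gamma$-invariance. First, $Td_{M_\gamma/G_\gamma}$ itself is $W_\gamma$-invariant, either by its intrinsic meaning as the Todd class of the tangent complex of the stack $M_\gamma/G_\gamma$ (which lies in $\widehat{\Hscr}_\gamma$ by definition), or by direct inspection of the product, whose indexing is permuted by any $\sigma\in W_\gamma$. Invariance of the square root then follows from the uniqueness built into Section 2.3: for any $\sigma\in W_\gamma$, the element $\sigma(Td_{M_\gamma/G_\gamma}^{1/2})$ has constant term $1$ and squares to $\sigma(Td_{M_\gamma/G_\gamma})=Td_{M_\gamma/G_\gamma}$, so by uniqueness it must coincide with $Td_{M_\gamma/G_\gamma}^{1/2}$. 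I do not foresee a serious obstacle; the only delicate point is making sure the uniqueness of Section 2.3 applies in $\widehat{\tilde{\Hscr}}_\gamma$, which is fine since $M_\gamma$ is a complex affine space and $H^*_{BT_\gamma}$ is concentrated in even degrees, so the recalled construction really does produce a unique element of $\widehat{\tilde{\Hscr}}_\gamma$ to which the Weyl group symmetry argument can be applied.
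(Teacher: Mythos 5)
Your proof is correct and takes essentially the same route as the paper: both reduce the lemma to $S_\gamma$-invariance of the square root, using that $Td_{M_\gamma/G_\gamma}$ has constant term $1$ and is itself $S_\gamma$-invariant. The only (minor and harmless) difference is the justification of the final step: the paper observes that the terms of the universal square-root expansion are expressions in the graded components of $Td_{M_\gamma/G_\gamma}$, which are invariant, whereas you invoke uniqueness of the constant-term-$1$ square root in the completed ring together with the fact that $S_\gamma$ acts by ring automorphisms -- both are valid one-line arguments.
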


\begin{proof}
We need to guarantee that $Td_{M_\gamma/G_\gamma}^{1/2}$ is $S_\gamma$-invariant. Since the leading coefficient of the power series $Td_{M_\gamma/G_\gamma}$ is $1$ and the terms in the expression for $()^{1/2}$ are rational functions in the terms of  $Td_{M_\gamma/G_\gamma}$ that are $S_\gamma$-invariant, it follows that $Td_{M_\gamma/G_\gamma}^{1/2}$ is $S_\gamma$-invariant. 
\end{proof}

\begin{corollary}\label{cor:v-comparison}
The map $v$ is an injective morphism of rings from $(\Rscr,\cdot_\Rscr)$ to $(\hat{\Hscr},\circ)$. 
\end{corollary}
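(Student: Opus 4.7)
The plan is to apply Grothendieck--Riemann--Roch directly to the composition $\mu=\pi_*i_*p^*$ defining the product on $\Rscr$, mirroring the $T$-equivariant computation of Lemma~\ref{lem:compare_ve} but now with the third pushforward $\pi_*:M_\gamma/G_{\gamma_1,\gamma_2}\to M_\gamma/G_\gamma$ along the partial flag variety. Using that $p^*$ commutes with $ch$, one obtains
\[
ch(f_1\cdot_\Rscr f_2)=Td^{-1}_{M_\gamma/G_\gamma}\,\pi_*i_*\bigl(Td_{M_{\gamma_1,\gamma_2}/G_{\gamma_1,\gamma_2}}\cdot p^*(ch(f_1)\cdot ch(f_2))\bigr).
\]
Multiplying by $Td^{1/2}_{M_\gamma/G_\gamma}$ and bringing this factor inside via the projection formula reduces the proof to identifying the Todd prefactor $Td^{-1/2}_{M_\gamma/G_\gamma}\cdot Td_{M_{\gamma_1,\gamma_2}/G_{\gamma_1,\gamma_2}}$ on the parabolic stack.

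To compute this prefactor, the parabolic tangent bundle decomposes as
\[
Td_{M_{\gamma_1,\gamma_2}/G_{\gamma_1,\gamma_2}}=Td_{M_{\gamma_1}/G_{\gamma_1}}(x')\,Td_{M_{\gamma_2}/G_{\gamma_2}}(x'')\,Td_{\overline{M_{\gamma_1\gamma_2}}}(x',x'')\,Td_{\overline{G_{\gamma_1\gamma_2}}}(x',x'')^{-1},
\]
since the parabolic retains only the ``upper'' off-diagonal blocks in both $TM_\gamma$ and $\gscr_\gamma$. Dividing by $Td_{M_\gamma/G_\gamma}^{1/2}$ via \eqref{eq:Todd_split_G}, the $Td_{M_{\gamma_i}/G_{\gamma_i}}^{1/2}$ factors combine with $ch(f_i)$ to form $v(f_i)$, while the remaining cross-block square roots telescope by \eqref{Td12vsTd21} and \eqref{TdG12vs21} into exactly $c_{\gamma_2}^{\gamma_1}(x')\,(c_{\gamma_1}^{\gamma_2})^{-1}(x'')$. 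The outcome is
\[
v(f_1\cdot_\Rscr f_2)=\pi_*i_*p^*\bigl(c_{\gamma_2}^{\gamma_1}v(f_1),(c_{\gamma_1}^{\gamma_2})^{-1}v(f_2)\bigr)=v(f_1)\circ v(f_2),
\]
the last equality being the definition of $\circ$.

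Injectivity is immediate: if $v(f)=ch(f)\,Td^{1/2}_{M_\gamma/G_\gamma}=0$, then since $Td^{1/2}_{M_\gamma/G_\gamma}$ has constant term $1$ and is therefore invertible in $\hat\Hscr_\gamma$, we deduce $ch(f)=0$, which forces $f=0$ by injectivity of the completed Chern character on $\Rscr_\gamma$. The main obstacle is the Todd-class bookkeeping in the telescoping step: carefully tracking which of $x',x''$ appears in each of $Td_{\overline{M_{\gamma_i\gamma_j}}},Td_{\overline{G_{\gamma_i\gamma_j}}}$ in \eqref{eq:Todd_split_G} and in the parabolic decomposition, so that the square roots collapse cleanly into $c_\tau^\gamma=(\tilde b_\tau^\gamma/d_\tau^\gamma)^{1/2}$; once this is checked, the argument is essentially the $G$-equivariant analogue of Lemma~\ref{lem:compare_ve}.
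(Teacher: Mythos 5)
Your argument is correct, but it takes a genuinely different route from the paper. The paper does not run Grothendieck--Riemann--Roch directly on the $G$-equivariant correspondence: instead it identifies $\Rscr$ and $\Hscr$ with the $S_n$-invariant parts of $\tilde\Rscr$ and $\tilde\Hscr$ (citing Davison and P\u{a}durariu, with the products realized as shuffle sums of $\cdot_{\tilde\Rscr_e}$, $\cdot_{\tilde\Hscr_e}$), notes that these identifications and the Chern character are $S_n$-equivariant (Lemma \ref{lem:chern_action}), and then deduces the corollary from the $T$-equivariant comparison Lemma \ref{lem:compare_ve}. You instead apply GRR to the proper composite $M_{\gamma_1,\gamma_2}/G_{\gamma_1,\gamma_2}\to M_\gamma/G_\gamma$, compute the Todd class of the parabolic quotient stack (diagonal blocks plus one set of cross blocks in both $TM_\gamma$ and $\gscr_\gamma$), and telescope against \eqref{eq:Todd_split_G} using \eqref{Td12vsTd21}, \eqref{TdG12vs21}; this indeed yields the prefactor $c_{\gamma_2}^{\gamma_1}(x')\,(c_{\gamma_1}^{\gamma_2}(x''))^{-1}$ and hence $v(f_1)\circ v(f_2)$, and your invertibility-of-$Td^{1/2}$ argument for injectivity is fine. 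What your route buys is that it bypasses the symmetrization/shuffle formalism, the auxiliary classes $e^\Hscr$, $e^\Rscr$ and the intermediate map $v_e$ altogether; what the paper's route buys is that all Todd manipulations happen in the simpler torus setting, where the scalar-action compatibility (Lemma \ref{lem:mult_prop_T}) is available, and these $T$-equivariant ingredients are reused verbatim in the potential case. One point you should make explicit, since your final step silently uses it: the combined prefactor must be of the form $p^*\bigl(A(x')\boxtimes B(x'')\bigr)$ with $A$ and $B$ separately $S_{\gamma_1}$- and $S_{\gamma_2}$-invariant, so that the expression is literally $\pi_*i_*p^*$ applied to $\bigl(c_{\gamma_2}^{\gamma_1}v(f_1)\bigr)\boxtimes\bigl((c_{\gamma_1}^{\gamma_2})^{-1}v(f_2)\bigr)$; this holds because the cross-block ratios collapse to $\tilde b$ and $d$, which depend only on the symmetric sums ${\bf x}_{\gamma_1^i}$, ${\bf x}_{\gamma_2^i}$. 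This is essential, because the naive $G$-equivariant analogue of Lemma \ref{lem:mult_prop_T} (pulling an arbitrary scalar class out of $\cdot_\Hscr$) fails due to the shuffle coming from $\pi_*$ --- your argument avoids it precisely because the twisting classes factor in this symmetric way, and the same symmetry check is what makes the paper's twisted product $\circ$ well defined.
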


\begin{proof}
Following \cite[\S4.2, Corollary 4.8]{Davison}, \cite[\S3.2, Proposition 3.6]{Padurariu}, $\Hscr$, $\Rscr$ are isomorphic to the $S_n$-invariant part of $\tilde{\Hscr}$, $\tilde{\Rscr}$. This isomorphisms are induced by pullbacks of stacks and a symmetrization which by \cite[Proposition 4.3]{Davison} can be realized as a pullback along a Galois cover. In particular these morphisms are compatible with the Chern character as pullbacks along a morphism.
Using this isomorphism we may view $v$ as the restriction of $v_e$ to the invariant part of $\tilde{\Rscr}$. 

Multiplication on $\tilde{\Rscr}^{S_n}$, resp. $\tilde{\Hscr}^{S_n}$, is defined as $$\sum_{\sigma\in \Pscr(\gamma_1,\gamma_2)}\sigma \cdot_{\tilde{\Rscr}_e} \text{, resp. }\sum_{\sigma\in \Pscr(\gamma_1,\gamma_2)}\sigma \cdot_{\tilde{\Hscr}_e},$$ where $\Pscr(\gamma_1,\gamma_2)$ is the set of shuffles of $(\gamma_1,\gamma_2)$ in $\gamma$. These are $\tilde{m}$, resp. $m_T$, in \cite[p.35 (arXiv)]{Padurariu}, resp. \cite[p.33 (arXiv)]{Davison}.

Since the Chern character is $S_n$-equivariant (see Lemma \ref{lem:chern_action}) we obtain the desired conclusion by Lemma \ref{lem:compare_ve}.

\end{proof}

\section{Quivers with Potential}

We will now extend our results from the previous section to the case of the K-theoretic Hall algebra and critical cohomological Hall algebras of quivers with potential. In particular we will consider from now on a quiver $Q$ with potential $W$ such that $0$ is the only critical value of the regular function $\mathrm{tr}\,W:M_\gamma/G_\gamma \to \CC$. We denote the critical fibre over $0$ by $M_{\gamma,0}$ and its inclusion by $\iota: M_0\hookrightarrow M$.

In order to use our results in full generality we recall the following result first proven in \cite[Theorem~3.9]{Orlov2004} in its stronger incarnation applying to Landau-Ginzburg models. 
\begin{theorem}\cite[Proposition 3.14]{ballard2014category}\cite[Theorem 3.6]{hirano2017derived}
Let $G$ be a reductive algebraic group acting on a smooth variety $X$ and $W:X \to \CC$ an invariant regular function. Then we have equivalences
$$\mathrm{MF}\left(X/G,W\right)\cong\mathrm{MF}\left(X,W\right)_G\cong \Dscr_G^{sg} \left(X_0\right)\cong \Dscr^{sg}\left(X_0/G\right).$$
\end{theorem}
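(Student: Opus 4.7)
The first and third equivalences are essentially tautological reformulations of the definitions, while the middle one is the genuine content (the equivariant Orlov equivalence). I would separate the proof into three parts accordingly.

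For $\mathrm{MF}(X/G,W)\cong \mathrm{MF}(X,W)_G$, I would unpack definitions. A locally free sheaf on the quotient stack $[X/G]$ is, by definition, a $G$-equivariant locally free sheaf on $X$, so a matrix factorization on $[X/G]$ is a $\ZZ/2$-graded pair $(E_0,E_1)$ of $G$-equivariant locally free sheaves on $X$ equipped with $G$-equivariant maps $d_0,d_1$ such that $d_1d_0=W\cdot\Id_{E_0}$ and $d_0d_1=W\cdot\Id_{E_1}$. That is the same data as an object of $\mathrm{MF}(X,W)_G$, and morphisms match as well. The equivalence $\Dscr_G^{sg}(X_0)\cong \Dscr^{sg}(X_0/G)$ reduces to the identifications $\mathrm{Coh}_G(X_0)=\mathrm{Coh}(X_0/G)$ and $\Perf_G(X_0)=\Perf(X_0/G)$, which then transfer to the Verdier quotients defining the two singularity categories.

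For the substantial equivalence $\mathrm{MF}(X,W)_G\cong \Dscr_G^{sg}(X_0)$, the plan is to adapt Orlov's original construction, working throughout with $G$-equivariant objects. Define a functor $\Phi:\mathrm{MF}(X,W)_G\to \Dscr_G^{sg}(X_0)$ by $(E_0\xrightarrow{d_0}E_1\xrightarrow{d_1}E_0)\mapsto \coker(d_1)$. Since $d_0d_1=W\cdot\Id$, the cokernel is killed by $W$, hence is naturally a $G$-equivariant coherent sheaf on $X_0$. A null-homotopy of a matrix factorization produces a finite $G$-equivariant locally free resolution of its cokernel, so null-homotopic factorizations land in $\Perf_G(X_0)$ and $\Phi$ descends to the Verdier quotient. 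To prove essential surjectivity, given $F\in D^b_G(X_0)$, push forward along $\iota$ and choose a $G$-equivariant locally free resolution on $X$ (which exists since $G$ is reductive and $X$ is smooth with enough equivariant locally frees). By smoothness, after truncating beyond the global dimension, the syzygies $\ZZ/2$-periodize into a $G$-equivariant matrix factorization of $W$ whose image under $\Phi$ differs from $F$ by a perfect complex. Fully faithfulness is then checked by expressing $\Hom$ in both categories via $G$-invariants of equivariant Hom-complexes and reducing to Orlov's non-equivariant computation term by term, using exactness of $G$-invariants (reductivity of $G$).

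The main obstacle is the equivariant periodization step: one must produce the $\ZZ/2$-periodic stabilization of a resolution simultaneously with the $G$-action and show the choice is well-defined up to null-homotopy in $\mathrm{MF}(X,W)_G$. Reductivity of $G$ is essential here, both to guarantee the existence of $G$-equivariant locally free resolutions and to ensure that the invariants functor needed to compare Hom-spaces remains exact, so that the non-equivariant calculation passes to the equivariant setting without correction terms.
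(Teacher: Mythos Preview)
The paper does not prove this theorem; it is stated purely as a recollection from the literature, with citations to Orlov's original result and its equivariant/gauged extensions by Ballard--Favero--Katzarkov and Hirano. There is therefore no proof in the paper to compare your proposal against.

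That said, your sketch is a reasonable outline of the standard argument. The first and last equivalences are indeed tautological once one accepts that sheaves on $[X/G]$ are $G$-equivariant sheaves on $X$. For the middle equivalence your plan to run Orlov's cokernel functor and periodization argument $G$-equivariantly is the expected route; reductivity of $G$ is exactly what is needed both for the existence of equivariant locally free resolutions (resolution property for $[X/G]$) and for exactness of $(-)^G$ when comparing Hom-spaces. One small slip: in your conventions $d_1:E_1\to E_0$, so it is $d_1d_0=W\cdot\Id_{E_0}$ that forces $W$ to annihilate $\coker(d_1)$, not $d_0d_1$. The cited references in fact work in greater generality (graded/factorization categories over more general bases) and organize the argument somewhat differently, but for the case at hand your direct adaptation of Orlov is adequate.
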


\subsection{K-theoretic Hall algebra of quiver with potential}

By \cite[Proposition~3.4, Proposition~3.5]{Padurariu} the morphisms from \eqref{eq:morphisms_inducing_multiplication} for $\gamma=\gamma_1 + \gamma_2$ induce exact functors between singularity categories
\begin{align*}\Dscr^{sg}\left(M_{\gamma_1,0}/G_{\gamma_1}\right)\times \Dscr^{sg}\left(M_{\gamma_2,0}/G_{\gamma_2}\right)&\xrightarrow{p^*}\Dscr^{sg} \left(M_{\gamma_1,\gamma_2,0}/G_{\gamma_1,\gamma_2}\right)\\
\Dscr^{sg} \left(M_{\gamma_1,\gamma_2,0}/G_{\gamma_1,\gamma_2}\right)&\xrightarrow{i_*}  \Dscr^{sg}\left(M_{\gamma_1,\gamma_2,0}/G_{\gamma}\right)\\
  \Dscr^{sg}\left(M_{\gamma_1,\gamma_2,0}/G_{\gamma}\right)&\xrightarrow{\pi_*} \Dscr^{sg}\left(M_{\gamma,0}/G_{\gamma}\right).  
  \end{align*}

By \cite[Section~3]{Padurariu} this gives rise to a well-defined associative algebra structure.

\begin{definition}\cite[Definition~3.1]{Padurariu}
The $K$-theoretic Hall algebra of $\left(Q,W\right)$ is given by the $\ZZ^I$-graded abelian group with the $\gamma\in \ZZ^I$ part
$$\mathrm{KHA}\left(Q,W\right)_\gamma:= K_0\left(\Dscr^{sg}(M_{\gamma,0}/G_\gamma\right))=K_0^{G_\gamma}(\Dscr^{sg}(M_{\gamma,0}/G_\gamma))$$
with multiplication given by 
$$f\cdot g:=\pi_*\circ i_* \circ p^* \left(f,g\right).$$
\end{definition}

Observe that for the above result \cite{Padurariu} proves associativity on the categorical level, in particular one can define similar topological $K$-theoretical Hall algebras.

\subsection{Critical cohomological Hall algebra of a quiver with potential}

Similarly \cite[\S 3]{Davison} defines the critical cohomological Hall algebra as the induced algebra on the critical cohomology of $W$. We first recall the definition of the vanishing cycles $\varphi_W$.
\begin{definition}
Let $f:X\to \CC$ be a regular function. 
Consider the inclusion of the zero fibre $\iota: X_0 \hookrightarrow X$ and the pullback diagram 
$$\tikz[xscale=2,yscale=2]{
\node (X) at (0,0) {${X}$};
\node (XT) at (0,1) {$\widehat{X}$};
\node (C1) at (1,0) {$\CC$};
\node (C2) at (1,1) {$\CC$};

\draw[->]
(XT) edge node[left] {$p$}(X)
(XT) edge node[above]{$\widehat{f}$}(C2)
(X) edge  node[above] {$f$}(C1)
(C2) edge node[right] {$\mathrm{exp}$} (C1);
}.$$
Then the vanishing sheaf of $f$ is defined as
$$\iota^* \mathrm{RHom}\left(f^*\left(\mathrm{exp}_!\QQ \to \QQ\right),\QQ\right).$$
\end{definition}
In particular $\varphi_f$ allows the cohomological study of the singularity of $f$ at $0$. We use the shorthand $\varphi_W$ for $\varphi_{\mathrm{tr} W}$.

\begin{definition}
The critical cohomological Hall algebra of $\left(Q,W\right)$ is the $\ZZ^I$-graded algebra given by $\QQ$-vector spaces
$$\mathrm{CoHA}\left(Q,W\right)_\gamma:=H_{G_\gamma}^*\left(M_{\gamma},\varphi_W\right) $$
with multiplication
$$f\cdot g := \pi_*\circ i_* \circ p^*\left(f,g\right).$$
\end{definition}

\subsection{Comparison for quiver with potential}

We now prove that our comparison morphism from Section~\ref{sec:G_comparison} can be carried over to the case of quivers with potential using the algebra morphism $\iota^*: H^*\left(\Xscr\right)\to H^*\left(\Xscr_0\right)$.

For the remainder we fix a quiver with potential $\left(Q,W\right)$ and use the following shorthand for its $K$-theoretic Hall algebra, respectively its cohomological Hall algebra,
\begin{align*}
\Rscr^W  &:=\mathrm{KHA}\left(Q,W\right), \\
 \Hscr^W &:= \mathrm{CoHA}\left(Q, W\right).
\end{align*}

As we will use Chern characters to compare these two Hall algebras we need to pass to the completed cohomological Hall algebra
$$ \widehat{\Hscr}^W := \widehat{\mathrm{CoHA}}\left(Q, W\right)$$
given by $$\widehat{\Hscr}^W_\gamma \cong \widehat{H}_{G_\gamma}\left(M_\gamma,\varphi_W\right):=\prod_{n\in \NN}H^n_{G_\gamma}\left(M_\gamma,\varphi_W\right)$$


Let $h: \Xscr \to \Yscr$ be a morphism of stacks, then we have the following version of Grothendieck-Riemann-Roch result arising by composing the map in \cite{Padurariu2023} with the natural map sending monodromy invariant vanishing cycles to all vanishing cycles induced by the splitting of \cite[(6.1)]{Padurariu2023} given in \cite[Lemma~7.2]{Padurariu2023}.
\begin{theorem}\cite[Theorem~6.8]{Padurariu2023}
Let $h:\Xscr \to \Yscr $ be a morphism of smooth quotient stacks, $W_\Yscr:\Yscr \to \CC$ a regular function and set $W_\Xscr:=W_\Yscr \circ h$. 
\begin{enumerate}
\item The following diagram commutes
$$\tikz[xscale=5,yscale=2]{
\node (KMFY) at (0,1) {$K_0\left(\mathrm{MF}\left(\Yscr,W_\Xscr\right)\right)$};
\node (KMFX) at (1,1) {$K_0\left(\mathrm{MF}\left(\Xscr,W_\Yscr\right)\right)$};
\node (HcritY) at (0,0) {$\widehat{H}^*\left(\Yscr,\varphi_{W_\Xscr}\right)$};
\node (HcritX) at (1,0) {$\widehat{H}^*\left(\Xscr,\varphi_{W_\Yscr}\right)$};

\draw[->]
(KMFY) edge node[left] {$ch$}(HcritY)
(KMFX) edge node[left]{$ch$}(HcritX)
(KMFY) edge  node[above] {$h^*$}(KMFX)
(HcritY) edge node[above] {$h^*$} (HcritX);
}.$$
\item Assume that $h$ is proper. Let $Td_h\in \widehat{H}\left(\Xscr_0\right)$ be the Todd class of the virtual tangent bundle $T_h$. Then the following diagram commutes:
$$\tikz[xscale=5,yscale=2]{
\node (KMFX) at (0,1) {$K_0\left(\mathrm{MF}\left(\Xscr,W_\Xscr\right)\right)$};
\node (KMFY) at (1,1) {$K_0\left(\mathrm{MF}\left(\Yscr,W_\Yscr\right)\right)$};
\node (HcritX) at (0,0) {$\widehat{H}^*\left(\Xscr,\varphi_{W_\Xscr}\right)$};
\node (HcritY) at (1,0) {$\widehat{H}^*\left(\Yscr,\varphi_{W_\Yscr}\right)$};

\draw[->]
(KMFX) edge node[left] {$ch$}(HcritX)
(KMFY) edge node[left]{$ch$}(HcritY)
(KMFX) edge  node[above] {$h_*$}(KMFY)
(HcritX) edge node[above] {$Td_h h_*$} (HcritY);
}.$$

\end{enumerate}
\end{theorem}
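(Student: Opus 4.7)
The strategy is to port the proof of Corollary~\ref{cor:v-comparison} from the case $W=0$ to arbitrary potential, using the critical Grothendieck--Riemann--Roch stated just above (= \cite[Theorem~6.8]{Padurariu2023}) as the drop-in replacement for the ordinary GRR invoked in \S3. The key observation is that the Todd classes $Td_{M_\gamma/G_\gamma}$, the correction factors $Td_{\overline{M_{\gamma_1\gamma_2}}}$, $Td_{\overline{G_{\gamma_1\gamma_2}}}$, and the twisting elements $c_\tau^\gamma$ all live in the ordinary equivariant cohomology of the ambient smooth stacks, and they act on critical cohomology via the module structure $\iota^*\colon \widehat{H}^*_{G_\gamma}(M_\gamma,\QQ)\to \widehat{H}^*_{G_\gamma}(M_\gamma,\varphi_W)$. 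Hence the definition $v(f):=Td_{M_\gamma/G_\gamma}^{1/2}\,ch(f)$ makes sense in the critical setting, and the twisted multiplication $\circ$ on $\widehat{\Hscr^W}$ is defined exactly as in \S\ref{sec:G_comparison}.

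First I will establish the $T$-equivariant analog of Lemma~\ref{lem:mult_Chern}, setting $\tilde v^W(f):=ch(f)\,Td_{M_\gamma/T_\gamma}^{1/2}$. The proof of Lemma~\ref{lem:mult_Chern} uses only compatibility of $ch$ with pullback and the relative correction $ch(h_*\alpha)=h_*(Td_h\cdot ch(\alpha))$ for proper $h$; both hold in the critical setting by the two parts of the critical GRR. Because the maps $p,i,\pi$ and their virtual tangent bundles are inherited from the ambient smooth stacks, the relative Todd classes coincide with those computed in \S3, and the identities \eqref{eq:Todd_split}, \eqref{Td12vsTd21} apply verbatim. The identical chain of equalities then yields
\[
\tilde v^W(f_1\cdot_{\tilde\Rscr^W}f_2)=(\tilde b_{\gamma_2}^{\gamma_1})^{1/2}\tilde v^W(f_1)\cdot_{\tilde\Hscr^W}(\tilde b_{\gamma_1}^{\gamma_2})^{-1/2}\tilde v^W(f_2).
\]

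Next I descend to the $G$-equivariant statement following \S\ref{sec:G_comparison}. Three ingredients are needed: (i) the isomorphisms $\Hscr^W\cong(\tilde\Hscr^W)^{S_\gamma}$ and $\Rscr^W\cong(\tilde\Rscr^W)^{S_\gamma}$, i.e.\ the critical versions of the results recalled in the proof of Corollary~\ref{cor:v-comparison}; (ii) $S_\gamma$-equivariance of the critical Chern character, which is immediate from part~(1) of the critical GRR since $S_\gamma$ acts by pullback along a Galois cover; and (iii) the Todd class factorization \eqref{eq:Todd_split_G} together with Lemma~\ref{lem:eu_TdG}. With these, the proofs of Lemma~\ref{lem:compare_ve} and Corollary~\ref{cor:v-comparison} transport to the critical setting, giving the theorem. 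The principal obstacle is verifying that the $S_\gamma$-invariants description and the shuffle-product formula for $\Hscr^W$ and $\Rscr^W$ hold in critical cohomology and for matrix factorizations; these are known from \cite{Davison,Padurariu}, so no genuinely new computation is required beyond the critical GRR.
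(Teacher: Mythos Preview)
Your proposal does not address the stated theorem. The statement you were given is the critical Grothendieck--Riemann--Roch theorem quoted from \cite[Theorem~6.8]{Padurariu2023}; the paper does not prove it but merely cites it, with the subsequent Remark noting that the argument in \cite{Padurariu2023} (classical GRR combined with the long exact sequences coming from $\Dscr^b\Perf(\Zscr_0)\hookrightarrow \Dscr^b(\Zscr_0)\twoheadrightarrow \Dscr^{sg}(\Zscr_0)$) carries over from topological $K$-theory to algebraic $K_0$. Your write-up, by contrast, \emph{uses} this theorem as a black box in order to prove Theorem~\ref{thrm: comparison by twisted chern character}; it contains no argument for either commutative diagram in the GRR statement itself.

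If your intention was in fact to prove Theorem~\ref{thrm: comparison by twisted chern character}, then your outline matches the paper's strategy, but you skip the one genuinely new verification the paper singles out. In the $W=0$ case, the chain of equalities in Lemma~\ref{lem:mult_Chern} relies on Lemma~\ref{lem:mult_prop_T}, i.e.\ that multiplying by an element of $\Sscr_\gamma$ commutes with $\tilde p^*$ and $\tilde i_*$. In the critical setting this becomes the statement that the $H^\bullet_{T_\gamma}(M_\gamma,\QQ)$-module action on $H^\bullet_{T_\gamma}(M_\gamma,\varphi_W)$ commutes with $\tilde p^*$ and $\tilde i_*$; the paper checks this explicitly, invoking the construction of the action from \cite[\S2.6--2.7]{Davison} and, for the pushforward along the closed embedding, the transversality argument of \cite[Corollary~2.15]{Davison}. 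Your sentence ``the identical chain of equalities then yields\ldots'' presupposes exactly this compatibility without justifying it, so the proposal has a gap at the point the paper identifies as the only nontrivial step.
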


\begin{remark}
The above theorem is proven in \cite{Padurariu2023} for topological $K$-theory by combining classical Grothendieck-Riemann-Roch with the long exact sequences induced by the short exact sequences, $\Zscr\in \{\Xscr,\Yscr\}$,
$$\Dscr^b \mathrm{Perf}\left(\Zscr_0\right)\hookrightarrow \Dscr^b\left(\Zscr_0\right) \twoheadrightarrow \Dscr^\mathrm{sg}\left( \Zscr_0\right),$$
in particular the same proof works for algebraic $K_0$.
\end{remark}


\begin{lemma}\label{lem:Todd class of singularity category is restriction}
We have $\iota^*\circ Td_{h}=Td_{h|_{\Xscr_0}}\in H^*\left(\Xscr_0 \right)$.
\begin{proof}
Consider for a morphism $h:\Xscr \to \Yscr$ of stacks with potential $f_\Yscr:\Yscr \to \CC$ and $f_\Xscr:= f_\Yscr \circ h$ the induced diagram 
$$\tikz[xscale=5,yscale=2]{
\node (X) at (0,1) {$\Xscr$};
\node (Y) at (1,1) {$\Yscr$};
\node (X0) at (0,0) {$\Xscr_0$};
\node (Y0) at (1,0) {$\Yscr_0$};

\draw[->]
(X) edge node[above] {$h$}(Y)
(X0) edge node[above]{$h|_{\Xscr_0}$}(Y0);
\draw[right hook ->]
(X0) edge node[right] {$\iota$}  (X)
(Y0) edge node[left] {$\iota$} (Y);
}.$$
Then we get by Grothendieck-Riemann-Roch $\iota^*\circ Td_{h}=Td_{h|_{\Xscr_0}} \circ \iota^*$. As $\iota^*$ is an algebra morphism we can apply it to $1$ in order to get $\iota^* Td_{h}=Td_{h|_{\Xscr_0}}$ in  $H^*\left(\Xscr_0 \right)$.
\end{proof}
\end{lemma}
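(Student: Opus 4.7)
The plan is to prove the identity by showing that the relative (virtual) tangent complex is compatible with the pullback along $\iota$, and then invoking the naturality of the Todd class as a multiplicative characteristic class.

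First I would observe that since $W_\Xscr = W_\Yscr \circ h$, the zero fibers satisfy $\Xscr_0 = h^{-1}(\Yscr_0)$, so the square
\[
\begin{tikzcd}
\Xscr_0 \arrow[r, "\iota"] \arrow[d, "h|_{\Xscr_0}"'] & \Xscr \arrow[d, "h"] \\
\Yscr_0 \arrow[r, "\iota"'] & \Yscr
\end{tikzcd}
\]
is Cartesian. Because $h$ is a morphism of smooth quotient stacks and $\iota$ is a regular embedding (cut out by $W_\Yscr$ on $\Yscr$, respectively $W_\Xscr$ on $\Xscr$), base change applies to the relative tangent complex and yields an isomorphism $T_{h|_{\Xscr_0}} \simeq \iota^* T_h$ of perfect complexes on $\Xscr_0$.

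Next, the Todd class is given by a universal power series in the Chern roots of its input, and Chern classes are natural with respect to pullback of perfect complexes. Chaining the identifications then yields
\[
Td_{h|_{\Xscr_0}} \;=\; Td\bigl(T_{h|_{\Xscr_0}}\bigr) \;=\; Td(\iota^* T_h) \;=\; \iota^*\bigl(Td(T_h)\bigr) \;=\; \iota^*(Td_h),
\]
which is the desired equality in $\widehat{H}^*(\Xscr_0)$.

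The main subtlety is confirming that base change for the virtual tangent complex is valid in the quotient-stack setting used throughout the paper; once this is granted, the rest is a formal naturality argument and no real computation is required. As an alternative route matching the author's hint, one can deduce the operator identity $\iota^* \circ Td_h = Td_{h|_{\Xscr_0}} \circ \iota^*$ from the functoriality of Grothendieck--Riemann--Roch applied to the Cartesian square above (comparing pushforward along the two compositions $h\circ\iota = \iota\circ h|_{\Xscr_0}$), and then evaluate this operator equality at $1 \in \widehat{H}^*(\Xscr)$, using that $\iota^*$ is a ring homomorphism, to extract the stated class-level identity.
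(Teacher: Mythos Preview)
Your proposal is correct, and your alternative route (derive the operator identity from GRR functoriality applied to the square, then evaluate at $1$) is exactly the paper's argument. Your primary approach---identifying $T_{h|_{\Xscr_0}} \simeq \iota^* T_h$ via base change for the Cartesian square and then invoking naturality of the Todd class under pullback---is more direct and exposes the mechanism behind the paper's GRR step: the compatibility of Todd classes along the two factorizations $h\circ\iota = \iota\circ h|_{\Xscr_0}$ reduces to this same tangent-complex identification, using that the normal bundles of both inclusions $\iota$ are trivial (each fiber being cut out by a single function). Your version avoids the mildly circular detour through an operator identity that is equivalent to the class-level statement; the paper's version has the virtue of staying within the GRR language already in play without unpacking $T_h$ explicitly.
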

Using the above lemma and the algebra morphism $\iota^*$ we can define an analogous comparison map and twisted multiplication to the one from Section~\ref{sec:G_comparison}:
\begin{align*}
v_{sg}:\Rscr^W &\to \widehat{\Hscr^W}\\
f&\mapsto  Td_{M_{\gamma,0}/G_\gamma}^{1/2}ch\left(f\right)
\end{align*}
and a twisted multiplication on $\Hscr^W$ given by
$$f_1\circ f_2 := \left(\iota^*\widehat{c}_{\gamma_2}^{\gamma_1}\right)f_1\cdot_{\Hscr^W}\left(\iota^*\widehat{c}_{\gamma_1}^{\gamma_2}\right)^{-1} f_2.$$

\begin{theorem}\label{thrm: comparison by twisted chern character}
	The morphism 
	$$v:\Rscr^W\to \left(\widehat{\Hscr^W},\circ\right)$$
	is a morphism of algebras.
\end{theorem}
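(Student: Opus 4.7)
The plan is to reduce the theorem to Corollary \ref{cor:v-comparison} (the $W=0$ case) by applying the algebra morphism $\iota^*:H^*(M_\gamma/G_\gamma)\to H^*(M_{\gamma,0}/G_\gamma)$ to the Todd-class identities established in Section \ref{sec:G_comparison}. The critical cohomology $H^*_{G_\gamma}(M_\gamma,\varphi_W)$ is a module over $H^*_{G_\gamma}(M_{\gamma,0})$, so the restricted Todd classes and the restricted twist $\iota^*c_\tau^\gamma$ will act on $\widehat{\Hscr^W}$ in a way that mirrors the plain-cohomological story.

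First, for $f_i\in \Rscr^W_{\gamma_i}$, I would unfold
\[
v(f_1\cdot_{\Rscr^W} f_2)=Td_{M_{\gamma,0}/G_\gamma}^{1/2}\, ch\bigl(\pi_* i_* p^*(f_1,f_2)\bigr),
\]
and apply the critical Grothendieck-Riemann-Roch of Theorem~4.5. Functoriality of $ch$ under $p^*$ and the $Td_{h}h_*$-compatibility under the proper morphism $h=\pi\circ i$ produce a factor $Td_{\pi\circ i}\in \widehat{H}^*(M_{\gamma_1,\gamma_2,0}/G_\gamma)$. By Lemma~\ref{lem:Todd class of singularity category is restriction}, this factor is exactly $\iota^*$ of the corresponding Todd class that appears in the proof of Lemma \ref{lem:mult_Chern} and Lemma \ref{lem:compare_ve}. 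Hence, after applying $\iota^*$, the splitting \eqref{eq:Todd_split_G} together with \eqref{Td12vsTd21}, \eqref{TdG12vs21}, and Lemma~\ref{lem:eu_TdG} give the same cancellations as in the $W=0$ case, leaving only the twist factor $\iota^*c_{\gamma_2}^{\gamma_1}\otimes (\iota^*c_{\gamma_1}^{\gamma_2})^{-1}$.

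Second, I would check that $\tau\mapsto (f\mapsto \iota^*c_\tau^\gamma f)$ is a homomorphism of semigroups $\ZZ^I\to \mathrm{Aut}(\widehat{\Hscr^W})$. This is immediate: the analogous statement over ordinary cohomology was recorded in \S\ref{sec:G_comparison}, and $\iota^*$ is a ring homomorphism compatible with the action on vanishing cycles. Associativity of the new product $\circ$ on $\widehat{\Hscr^W}$ then follows from the twisting-systems formalism of \S\ref{sec:twisting_systems}, and combining with the calculation above yields
\[
v(f_1\cdot_{\Rscr^W} f_2)=v(f_1)\circ v(f_2).
\]

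The main obstacle I foresee is establishing the potential analogue of Lemma~\ref{lem:mult_prop_T}: namely, that the module action of $H^*_{G_\gamma}(M_{\gamma,0})$ on $H^*_{G_\gamma}(M_\gamma,\varphi_W)$ commutes with the $\pi_* i_* p^*$-multiplication on $\widehat{\Hscr^W}$. This is what legitimises treating the twist factors $\iota^*c_{\gamma_i}^{\gamma_j}$ as genuine automorphisms of the graded algebra rather than merely formal cohomology classes. It should follow from the projection formula in critical cohomology of \cite{Padurariu2023} applied to $i$ and $\pi$, since the relevant classes are all pulled back from the ambient stacks; but checking it rigorously is where the bulk of the technical work will lie.
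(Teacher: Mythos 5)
Your overall strategy coincides with the paper's: restrict the Todd-class identities of \S\ref{sec:G_comparison} along $\iota^*$ via Lemma~\ref{lem:Todd class of singularity category is restriction}, rerun the computations of Lemmas~\ref{lem:mult_Chern} and \ref{lem:compare_ve} with the critical Grothendieck--Riemann--Roch theorem of \cite{Padurariu2023} in place of the ordinary one, use the cancellations coming from \eqref{eq:Todd_split_G}, \eqref{Td12vsTd21}, \eqref{TdG12vs21} and Lemma~\ref{lem:eu_TdG}, and conclude as in Corollary~\ref{cor:v-comparison}. You also correctly isolate the one genuinely new ingredient: a vanishing-cycle analogue of Lemma~\ref{lem:mult_prop_T}, i.e.\ that the module action of $H^\bullet_{T_\gamma}(\cdot,\QQ)$ on $H^\bullet_{T_\gamma}(M_\gamma,\varphi_W)$ is compatible with $\tilde p^*$ and $\tilde i_*$, which is what legitimises treating $\iota^*c_{\gamma_2}^{\gamma_1}$, $(\iota^*c_{\gamma_1}^{\gamma_2})^{-1}$ as a twist of the Hall product. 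This, however, is exactly the step you leave unproven, and your proposed fix --- ``the projection formula in critical cohomology of \cite{Padurariu2023} applied to $i$ and $\pi$'' --- does not work as stated: in critical cohomology the action is not a cup product internal to a ring to which an off-the-shelf projection formula applies. It is defined, following \cite[\S 2.6]{Davison}, on $H_{c,T_\gamma}(M_\gamma,\varphi_W)^\vee$ as a limit over $N$ of pullbacks along the maps $\Delta_N\colon X\times_{G}fr(n,N)\to \bigl(X\times_{G}fr(n,N)\bigr)\times\bigl(pt\times_{G}fr(n,N)\bigr)$, precomposed with the Thom--Sebastiani isomorphism, so any compatibility statement must be checked against this construction.

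The paper does precisely that, and this is where the substance of its proof lies: compatibility with $\tilde p^*$ is immediate because the action is itself given by pullback (using that source and target of $\tilde p$ carry the same group $T_{\gamma_1}\times T_{\gamma_2}$), while compatibility with the closed embedding $\tilde i$ is obtained from the square relating $\Delta_N$ for source and target, whose relevant images intersect transversally, so that \cite[Corollary 2.15]{Davison} gives $i_*(\Delta^X_N)^*=(\Delta^Y_N)^*i_*$; passing to the limit over $N$ yields the required commutation, again using that source and target of $\tilde i$ carry the same group. Until you supply an argument of this kind (or a precise reference establishing that the $H^\bullet_{T}(\cdot,\QQ)$-module structure on vanishing-cycle cohomology commutes with proper pushforward), your outline has a gap at its load-bearing step; the remainder of your proposal --- restriction of Todd classes, the resulting cancellations, and the semigroup property of $\tau\mapsto \iota^*c^\gamma_\tau$ guaranteeing associativity of $\circ$ --- matches the paper's argument, which, like yours, ultimately reduces everything to the $W=0$ computation after passing through the $T$-equivariant setting.
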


\begin{proof}
	We want to apply our computations from \S\ref{sec:G_comparison} to the case of quivers with potential. 
	We basically repeat the proof,  first passing through the $T$-equivariant setting. We only mention the necessary modifications. 
	
	By Lemma~\ref{lem:Todd class of singularity category is restriction} we have $\iota^*\circ Td_{h}=Td_{h|_{\Xscr_0}}$. This allows us to use formulas \eqref{Td12vsTd21}, \eqref{TdG12vs21}. 
	The only other thing that we need is an analogue of Lemma \ref{lem:mult_prop_T}. The action of $H_{T_\gamma}^\bullet(M_\gamma,\QQ)$ (or equivalently $H_{T_\gamma}^\bullet(\cdot,\QQ)$ as $M_\gamma$ is $T_\gamma$-contractible) on $H^\bullet_{T_\gamma}(M_\gamma,\phi_W)=H^\bullet_{c,T_\gamma}(M_\gamma,\phi_W)^\vee$ is constructed in \cite[\S2.6]{Davison}. To apply the proof of Lemma \ref{lem:mult_prop_T} we need that the action commutes with $\tilde{p}^*$ and $\tilde{i}_*$. 
	
	The action of $H_G^*(\cdot,\QQ)$ on $H_{c,G}(X,\phi_f)^\vee$ for $G<GL_n(\CC)$ in loc.cit. is defined as the limit (over $N\geq n$)  of the pullback maps (cf. the beginning of \S2.7 in loc.cit.)
	\[\Delta_N:(X\times_G fr(n,N))\to 
	(X\times_G fr(n,N))\times (pt\times_G fr(n,N)),\; (x,z)\mapsto ((x,z),(pt,z))
	\]
	on the dual of compactly supported cohomologies with coefficients $\phi_{f_N}$, resp. $\phi_{g_N}$ where $f_N$, resp. $g_N$, is the map induced by $f$ on the domain $A_X$, resp. the target $B_X$, of $\Delta_N$, precomposed by the Thom-Sebastiani isomorphism. 
	
	Then it follows that the action commutes with the pullback. (Note that in our case it is important that the domain and codomain of $\tilde{p}$ are acted upon the same group $T_{\gamma_1}\times T_{\gamma_2}$.)
	
	We next claim that the action commutes with the pushforward of a closed embedding $i:X\to Y$, where both are acted upon $G$. We have the following diagram 
\[
\begin{tikzcd}
A_X\ar[r,"i"]\ar[d,"\Delta^X_N"]&A_Y\ar[d,"\Delta_N^Y"]\\
B_X\ar[r,"i"]&B_Y.
\end{tikzcd}
\]
Note that the images of $B_X$ and $A_Y$ in $B_Y$ intersect transversally. We may then apply \cite[Corollary 2.15]{Davison} which implies that $i_*(\Delta^X_N)^*=(\Delta^Y_N)i_*$. Passing to the limit we obtain that the action commutes with $i_*$. Consequently, in our setting the action commutes with $\tilde{i}_*$, where it is again important that the domain and codomain of $\tilde{i}$ are acted upon the same group $T_{\gamma_1}\times T_{\gamma_2}$.
	\end{proof}

\begin{remark}
As \cite{Gautam2013} constructed a comparison of the finite length modules over Yangians and quantum affine algebras a similar result is expected for CoHAs and KHAs. This holds as the arguments from \cite{Lunts2024} carry over for the morphism constructed in Theorem~\ref{thrm: comparison by twisted chern character}. In particular one gets an identification of finite length modules over $\Rscr$ and $\Hscr$.
\end{remark}

\bibliographystyle{alpha}
\bibliography{chern}  

\end{document}